\newtheorem{theorem}{Theorem}[section]
\newtheorem{corollary}[theorem]{Corollary}
\newtheorem{proposition}[theorem]{Proposition}
\newtheorem{lemma}[theorem]{Lemma}
\newtheorem{remark}[theorem]{Remark}
\newtheorem{definition}[theorem]{Definition}
\newtheorem{claim}[theorem]{Claim}
\newtheorem{problem}[theorem]{Problem}
\newtheorem{notation}[theorem]{Notation}
\newtheorem{beispiel}[theorem]{Example}
\newtheorem{conjecture}[theorem]{Conjecture}
\newenvironment{theo}[1][\empty]{\begin{theorem} 
\ifthenelse{\equal{#1}{\empty}} {}  {\itshape(#1)} \normalfont ~\\}{\end{theorem}}
\newenvironment{cor}[1][\empty]{\begin{corollary} 
\ifthenelse{\equal{#1}{\empty}} {}  {\itshape(#1)} \normalfont ~\\}{\end{corollary}}
\newenvironment{prop}[1][\empty]{\begin{proposition} 
\ifthenelse{\equal{#1}{\empty}} {}  {\itshape(#1)} \normalfont ~\\}{\end{proposition}}
\newenvironment{lem}[1][\empty]{\begin{lemma} 
\ifthenelse{\equal{#1}{\empty}} {}  {\itshape(#1)} \normalfont ~\\}{\end{lemma}}
\newenvironment{rem}[1][\empty]{\begin{remark} 
\ifthenelse{\equal{#1}{\empty}} {}  {\itshape(#1)} \normalfont ~\\}{\end{remark}}
\newenvironment{defi}[1][\empty]{\begin{definition} 
\ifthenelse{\equal{#1}{\empty}} {}  {\itshape(#1)} \normalfont ~\\}{\end{definition}}
\newenvironment{proof}[1][\empty]{\ifthenelse{\equal{#1}{\empty}} {\paragraph{\textbf{Proof.}}~\\}  
{\paragraph{\textbf{Proof} of #1.}~\\}} {\hfill $\Box$}
\newcommand {\N}[0] {\mathbb{N}}
\newcommand {\R}[0] {\mathbb{R}}
\renewcommand {\S}[0] {\mathbb{S}}
\newcommand {\B}[0] {\mathbb{B}}
\newcommand {\NP}[0] {\mathbb{NP}}
\newcommand{\conv}[0] {\mathrm{conv}}
\newcommand{\aff}[0] {\mathrm{aff}}
\newcommand{\lin}[0] {\mathrm{lin}}
\newcommand{\ext}[0] {\mathrm{ext}}
\newcommand{\bd}[0]{\mathrm{bd}}
\renewcommand{\int}[0]{\mathrm{int}}
\newcommand{\relint}[0]{\mathrm{relint}}
\newcommand{\CC}[0]{\mathcal{C}}
\newcommand{\CH}[0]{\mathcal{H}}
\newcommand{\CV}[0]{\mathcal{V}}
\newcommand{\st}[0]{such that}
\begin{document}
\begin{titlepage}
  \renewcommand{\thefootnote} {\fnsymbol{footnote}} \vspace*{1,0cm}
  \begin{center}
 \LARGE  {\bf Sharpening Geometric Inequalities\\ using Computable Symmetry Measures} 
 \\[25mm]
 \normalsize
 \begin{minipage}[c]{10pt}
   \mbox{}
 \end{minipage}
 \begin{minipage}[c]{170pt}
   Ren\'{e} Brandenberg \\
   ~\\
   Zentrum Mathematik\\
   Technische Universität München\\
   Boltzmannstr.~3\\
   85747 Garching bei München \\
   Germany \\ 
   ~\\
   E-mail: brandenb@ma.tum.de
 \end{minipage} 
 \hfill
 \begin{minipage}[c]{10pt}
   \mbox{}
 \end{minipage}
\begin{minipage}[c]{170pt}

  Stefan K\"{o}nig \\
  ~\\
  Institut für Mathematik\\
  Technische Universität Hamburg-Harburg\\
  Schwarzenbergstr. 95\\
  21073 Hamburg\\
  Germany \\
  ~\\
  E-mail: stefan.koenig@tuhh.de
 \end{minipage}
 
 \vspace*{2.5cm}

\begin{minipage}[c]{410pt}
\noindent {\sc Abstract.}    
Many classical geometric inequalities on functionals of convex bodies depend on the dimension of the ambient space. We show that this dimension dependence may  often be replaced (totally or partially) by different symmetry measures of the convex body. Since these coefficients are bounded by the dimension but possibly smaller, our inequalities sharpen the original ones. Since they can often be computed efficiently, the improved bounds may also be used to obtain better bounds in approximation algorithms.
\end{minipage}

\vspace*{1cm}

\begin{minipage}[c]{410pt}
\noindent {\sc Key words.} Convex Geometry, Geometric Inequalities, Computational Geometry, 
Approximation Algorithms, Symmetry, Radii, Diameter, Width, Optimal Containment
\end{minipage}

\vspace*{1cm}
\begin{minipage}[c]{410pt}
This is a preprint. The proper publication in final form is available at journals.cambridge.org, DOI 10.1112/S0025579314000291.
\end{minipage}
\end{center}

\renewcommand{\thefootnote}%
{\arabic{}}

\end{titlepage}

\selectlanguage{english}

\renewcommand{\subseteq}{\subset}
\renewcommand{\supseteq}{\supset}
\newcommand{\rec}{\mathrm{rec}}
\newcommand{\ls}{\mathrm{ls}}

\section{Introduction}
\paragraph{} Since Jung's famous inequality \cite{jung-01} in 1901, geometric inequalities relating different radii of convex bodies form a central area of research in convex geometry. Starting with \cite{bonnesenFenchelT}, in many classic works of convexity, significant parts are devoted to geometric inequalities among radii (e.g.~\cite{bottemaGeomIneq}, \cite[Section 6]{dgk-63}, \cite[Chapter 6]{egglestonConvexity}, \cite[Section 4.1.3]{hadwigerBuch}). 

Interesting and beautiful results of their own, geometric inequalities also serve as indispensable tools for many results in convex geometry itself as well as in other application areas. It is therefore not surprising that results such as Jung's Inequality \cite{jung-01} or John's Theorem \cite{john} still are frequently cited in a broad variety of papers (see e.g.~\cite{henkJohn} on L\"owner-John ellipsoids). Thus, even more than a century after Jung's seminal inequality, the area of geometric inequalities in general and especially among radii is still a prosperous field of research (see \cite{boltyanski-martini-06, isoradials, bernardo-ratioRadii, henk92, hernandezIneqNd, perelman, scott} for inequalities among radii of convex bodies and \cite{HenkBetke-RadiiVolumes, hernandez,  HenkHernandez-RadiiVolumes} for inequalities involving radii and other geometric functionals).

\paragraph{} The kind of inequalities to be considered in the following usually bound a geometric functional (e.g.~a certain radius) of a convex body in terms of another one. The statement of the theorem is then usually in two parts: a general bound on the ratio of these two functionals 
that holds true for arbitrary convex bodies and an additional statement that the bound can be improved (sometimes to a trivial bound) if the body under investigation is symmetric. In this paper, we propose to use measures of symmetry to sharpen geometric inequalities for convex bodies that are not symmetric but possibly far from the worst case bound in the original theorem. We also refer to \cite{bf-08, kaijser-guo-99, kaijser-guo-2003} for related work in the same lines and especially to \cite{br-07, schneider-09}, demonstrating already the basic idea of the approach which we follow here.

In particular, we prove sharpened versions of a classic inequality between in- and circumradius (e.g.~\cite[p. 28]{fejesToth1953lagerungen}), and of the famous theorems of  \emph{Jung} \cite{jung-01}, \emph{Steinhagen} \cite{steinhagen}, \emph{Bohnenblust} \cite{bohnenblust-38}, and \emph{Leichtweiß} \cite{leichtweiss}. Moreover, we present a compact proof of an improved version of \emph{John's} inequality.

\paragraph{} The symmetry measures that we use for this purpose are variants of Minkowski's measure of symmetry and have the desirable advantage that they are computable for polytopes via Linear Programming (see Lemmas~\ref{lem:computeSC} and \ref{lem:computeS0}). Hence, the improvement from basing these inequalities on symmetry coefficients is not only of theoretical interest but also allows better bounds in practical applications and in particular in core set algorithms (see e.g.~\cite{coresetPaperDCG}).

As a noteworthy remark, our inequalities show, that in many cases the ratio between two functionals is bound solely to the symmetry coefficients and does not intrinsically depend on the dimension. The dimension dependence, which is known from the original theorems, only enters
the inequalities as a worst case bound on the symmetry coefficient.

\paragraph{}  The paper is organized as follows. Section~\ref{sec:radiiSymmetry} starts with the definition of the different radii that appear in the course of the paper along with some basic properties. Then, Section~\ref{sec:MinkAsymmDefSub} introduces variants of symmetry measures that we use in the subsequent sections. The remainder of the paper is organized in groups along the individual theorems in the section headings that are generalized.

\section{Radii Definitions and Preliminaries} 
\label{sec:radiiSymmetry}
\paragraph{} Before giving the radii definitions, we briefly explain our notation.

\paragraph{} Throughout this paper, we are working in $d$-dimensional real space $\R^d$\index{$\R^d$} and for $A \subseteq \R^d$ we write $\lin(A)$\index{linear hull}\index{$\lin(\cdot)$}, $\aff(A)$\index{affine hull}\index{$\aff(\cdot)$}, $\conv(A)$\index{convex hull}\index{$\conv(\cdot)$}, $\int(A)$\index{interior}\index{$\int(\cdot)$}, $\relint(A)$\index{relative interior}\index{$\relint(\cdot)$}, and $\bd(A)$\index{boundary}\index{$\bd(\cdot)$} for the linear, affine, or convex hull and the interior, relative interior and the boundary of $A$, respectively. For two points $x,y \in \R^d$, we abbreviate $[x,y]:= \conv\{x,y\}$.

The \emph{dimension} of a set $A \subset \R^d$  is the dimension of the smallest affine subspace containing it.
Furthermore, for any two sets $A, B \subset \R^d$ and $\rho \in \R$, let $\rho A := \{\rho a: a \in A\}$ and $A+B:= \{a+b: a\in A, b\in B\}$ the $\rho$-\emph{dilatation} of $A$ and the \emph{Minkowski sum} of $A$ and $B$, respectively. 
We abbreviate  $A + (-B)$ by $A -B$ and  $A+\{c\}$ by $A+c$.
A set $A \subset \R^d$ 
is called \emph{0-symmetric} if $-A=A$.
If there is a $c \in \R^d$ such that $-(c+A)= c+A$, we call $A$ \emph{symmetric}. 

\paragraph{} For two vectors $x,y\in \R^d$, we use the notation $x^Ty:= \sum_{i=1}^d x_i y_i$ 
for the standard scalar product of $x$ and $y$, and by 
$H_{\leq}(a,\beta):= \{x \in \R^d: a^T x \leq \beta\}$
we denote the half-space induced by $a\in \R^d$ and $\beta \in \R$, bounded by the hyperplane $H_{=}(a,\beta):= \{x \in \R^d: a^T x = \beta\}$. 

For a vector $a \in \R^d$ and a convex set $K \subset \R^d$, we write
$ h(K,a):= \sup \{a^T x: x \in K\}$ for the \emph{support function} of $K$ in direction $a$.

\paragraph{} A non-empty set $K \subset \R^d$ which is convex and compact is called a \emph{convex body}. 
We write $\CC^d$ for the family of all convex bodies and $\CC^d_0$ for the family of all fulldimensional convex bodies in $\R^d$. Further, we write $\ext(K)$, $\rec(K)$, and $\ls(K)$ for the set of  \emph{extreme points} of $K$, the \emph{recession cone} of $K$, and the \emph{lineality space} of $K$, respectively.

If a polytope $P$ is described as a bounded intersection of halfspaces, we say that $P$ is in $\CH$-presentation. If $P$ is given as the convex hull of finitely many points, we call this a $\CV$-presentation of $P$.
In both cases, the representation is called \emph{rational}, if all vectors given in the representation are rational. 
A \emph{simplex} is the convex hull of $d+1$ affinely independent points. 

We write $\B_2:= \{x \in \R^d: \|x\|_2 \leq 1\}$ for the unit ball of the Euclidean norm $\|\cdot\|_2$ in $\R^d$ and $\S_2:=\{x \in \R^d: \|x\|_2=1\}$ for the respective unit sphere.

Finally, for any $k \in \N$, we abbreviate $[k]:=\{1,\dots,k\}$.

\subsection{Radii Definitions}
\label{sec:radii}

\paragraph{} We start this section by defining the circumradius of a closed convex set $K\subseteq \R^d$ with respect to some gauge body $C\subseteq\R^d$. The circumradius appears at many points throughout this paper and also serves for the definition of other radii and symmetry coefficients. Note that in all the following definitions $C$ is not necessarily assumed to be symmetric. 

\begin{defi}[$C$-radius] \label{defi:Cradius}%
Let  $K, C \subseteq \R^d$ non-empty, closed, and convex. We denote by $R(K,C)$ the least dilatation factor $\rho \geq 0$, such that a translate of $\rho C$ 
contains $K$, and call it the \emph{$C$-radius} of $K$ (cf. Figure~\ref{fig:RKC}). In mathematical terms,

\begin{equation}
R(K,C):= \inf\{\rho\geq 0: \exists c \in \R^d ~s.t.~ K \subseteq c+\rho C\}.
\label{eq:Cradius}
\end{equation}
\end{defi}

\begin{figure}[h]
\centering
\includegraphics[width=1\textwidth]{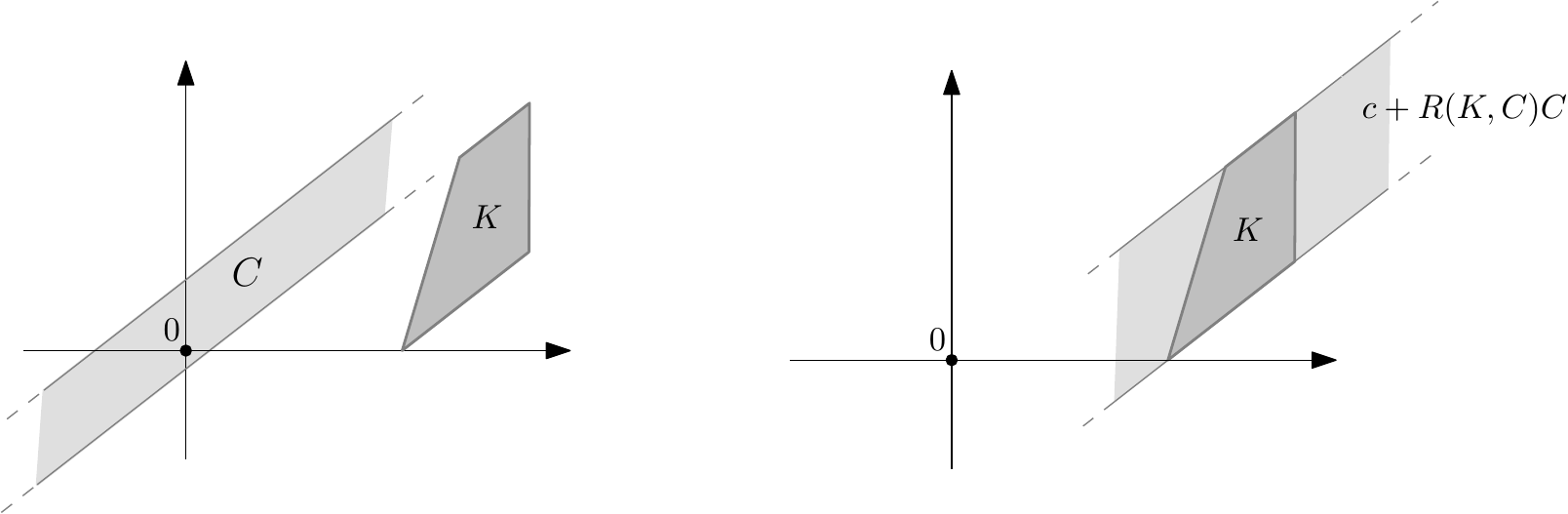}
\caption[$C$-Radius of a convex body $K$]{The $C$-radius of a convex body $K\subseteq \R^2$. Left: The convex body $K$ and an unbounded closed convex set $C$. Right: A minimally scaled copy of $C$ is translated such that it contains $K$. \label{fig:RKC}}
\end{figure}
\paragraph{} If $C = \B_2$ is the Euclidean ball, $R(K,\B_2)$ is the common Euclidean circumradius of $K$. If $C$ is 0-symmetric $R(K,C)$ measures the circumradius of $K$ with respect to the norm $\|\cdot\|_C$ induced by the gauge body $C$. Since Definition~\ref{defi:Cradius} allows unbounded convex sets $K$ and $C$, one has to be careful with the cases where the infimum in \eqref{eq:Cradius} is not attained. We treat these cases in the following lemma. 

Note that by definition $R(K,C)$ is invariant under translations of $K$ and $C$. 
Hence, we may assume $0\in \relint(K) \cap \relint(C)$ without loss of generality, wherever it simplifies the notation.

\begin{lem}
Let $K,C \subset \R^d$ convex and closed with $\ext(K)$, $\ext(C)$ bounded and 
$0 \in \relint(K) \cap \relint(C)$. Then, 
\begin{enumerate}[a)]
\item $R(K,C) < \infty$ if and only if $K \subset \lin(C)$ and $\rec(K) \subset \rec(C)$, \label{it:feasible}
\item $R(K,C)= 0$ if and only if $R(K,\rec(C)) < \infty$, and \label{it:0}
\item if $R(K,C) \not \in \{0,\infty\}$, there exists a center $c\in \R^d$ such that $K \subseteq c + R(K,C)C$.\label{it:centerExists}
\end{enumerate} 
\label{lem:radiiAttained}
\end{lem}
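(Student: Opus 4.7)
The plan is to prove the three parts in sequence, exploiting throughout the decomposition $K = \conv(\ext(K)) + \rec(K)$ (valid since $\ext(K)$ is bounded) and its analogue for $C$.

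For part (a), the forward direction is immediate: from $K \subseteq c + \rho C$ one reads off $\rec(K) \subseteq \rec(c + \rho C) = \rec(C)$ and $K \subseteq c + \aff(C) = c + \lin(C)$; the assumption $0 \in \relint(K)$ forces $c \in \lin(C)$, whence $K \subseteq \lin(C)$. For the converse, $\conv(\ext(K))$ is compact and contained in $\lin(C)$, and since $0 \in \relint(C)$ a neighborhood of the origin within $\lin(C)$ lies in $C$; a sufficiently large dilatate $\rho C$ therefore covers $\conv(\ext(K))$, and adding the recession part $\rec(K) \subseteq \rec(C) = \rec(\rho C)$ keeps us inside $\rho C$, giving $R(K,C) \leq \rho < \infty$.

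For part (b), the easy direction rests on the observation that $\rec(C) \subseteq \sigma C$ for every $\sigma > 0$: indeed, $v \in \rec(C)$ and $0 \in C$ imply $v/\sigma \in C$, i.e., $v \in \sigma C$. Hence $K \subseteq c + \rec(C)$ gives $K \subseteq c + \sigma C$ for every $\sigma > 0$, so $R(K,C) = 0$. For the converse, I would take $\rho_n \downarrow 0$ with centers $c_n$ satisfying $K \subseteq c_n + \rho_n C = c_n + \rho_n \conv(\ext(C)) + \rec(C)$, noting that the first Minkowski summand shrinks to $\{0\}$. Passing to the quotient $\R^d / \lin(\rec(C))$, the image $\pi(C)$ is compact with $0 \in \relint$, and $R(\pi(K), \pi(C)) = 0$ forces $\pi(K)$ to collapse to a point, so $K$ lies in a coset of $\lin(\rec(C))$. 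Arguing within this coset, using the admissible-center sets $A_\rho := \{c : K \subseteq c + \rho C\}$ and their translational invariance $A_\rho - \rec(C) \subseteq A_\rho$, one extracts the desired $c$ with $K \subseteq c + \rec(C)$.

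For part (c), take $\rho_n \downarrow R(K,C) > 0$ with $c_n \in A_{\rho_n}$. Using the nesting of the $A_\rho$ in $\rho$ together with the identity $\rec(A_\rho) = -\rec(C)$, one may choose $c_n$ whose projection onto a subspace complementary to $\lin(\rec(C))$ stays in a bounded region (the projected center set being compact by the argument from (b)). A convergent subsequence in the quotient, lifted via a canonical representative modulo $\rec(C)$, produces a limit point $c$; passing to the limit in the closed containment $K \subseteq c_n + \rho_n C$ then yields $K \subseteq c + R(K,C)\, C$. The main obstacle throughout (b)-forward and (c) is the potential unboundedness of the admissible-center sets $A_\rho$, caused by the recession cone of $C$; this is circumvented precisely by passing to $\R^d / \lin(\rec(C))$, where $C$ has compact image and standard compactness arguments become available.
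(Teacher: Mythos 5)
Parts a) and the backward direction of b) are correct and essentially the paper's own argument (decompose $K=\conv(\ext(K))+\rec(K)$, use that $0\in\relint(C)$ yields a relative neighbourhood of the origin inside $C$, and that $\rec(C)\subseteq\sigma C$ for every $\sigma>0$). In the forward direction of b) your quotient argument does show that $K$ lies in a single coset of $\lin(\rec(C))$, but what is needed is $K\subseteq c+\rec(C)$ for the \emph{cone} $\rec(C)$, and the step \enquote{one extracts the desired $c$} is not supplied; the invariance $A_\rho-\rec(C)\subseteq A_\rho$ does not produce it. The missing (and easy) observation is that a closed convex cone contains translates of arbitrarily large balls of its linear hull, so the bounded set $\conv(\ext(K))$, once confined to a coset of $\lin(\rec(C))$, fits into a translate of $\rec(C)$. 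The paper avoids this by arguing the contrapositive: if $R(K,\rec(C))=\infty$, part a) gives a point $x\in K\setminus\lin(\rec(C))$, and the Euclidean distance from $x$ to $\lin(\rec(C))$ bounds every admissible $\rho$ away from $0$.

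The genuine gap is in part c). Writing $L:=\lin(\rec(C))$, your quotient construction controls only the components of the centers $c_n$ transverse to $L$; their $L$-components remain uncontrolled. Replacing $c_n$ by a \enquote{canonical representative} of $\pi(c_n)$ is not legitimate: admissibility of a center is preserved only under translation by $-\rec(C)$ (this is exactly $A_\rho-\rec(C)\subseteq A_\rho$), whereas normalising to a fixed complement of $L$ requires translating by an arbitrary element of $L$, which is in general strictly larger than $\rec(C)\cup(-\rec(C))$. So no convergent sequence of admissible centers in $\R^d$ is ever produced, and the concluding \enquote{passing to the limit in $K\subseteq c_n+\rho_n C$} has nothing to pass to the limit with; note that the quotient only yields $\pi(K)\subseteq\bar c+R(K,C)\,\pi(C)$, which does not imply $K\subseteq c+R(K,C)C$ for any lift $c$ of $\bar c$. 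The whole content of c) is the nonemptiness of $A_{R(K,C)}=\bigcap_{\rho>R(K,C)}A_\rho$, and nested nonempty closed convex sets sharing the non-linear recession cone $-\rec(C)$ can have empty intersection, so some additional structural input is indispensable. The paper obtains it by passing to the bounded set $K_1=\conv(\ext(K))$ and then, using $K_1\not\subseteq\lin(\rec(C))$ (a consequence of $R(K,C)>0$ via part b)), replacing $C$ by the compact body $C\cap\rho_2\B_2$ without changing the radius, after which the Blaschke selection theorem delivers an honest limit center in $\R^d$. You need an argument of this kind, or some other device bounding the $L$-components of the $c_n$, to close part c).
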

\begin{proof}
Let $K_1:= \conv (\ext(K))$ and $C_1:= \conv(\ext(C))$ such that $K$ and $C$ can be expressed as $K= K_1 +\rec(K)$ and $C= C_1+\rec(C)$, respectively.

\begin{enumerate}[a)]
\item If $R(K,C) < \infty$, there exist $c \in \R^d$, $\rho \geq 0$ such that $K \subseteq c +\rho C$. This implies the right hand side in~\ref{it:feasible}).
If, on the other hand, $K \subseteq \lin(C)$ and $\rec(K) \subseteq \rec(C)$, we immediately obtain $K_1\subseteq \lin(C)$ and since $K_1$ is bounded and $0 \in \relint(C)$, there exists 
$\rho > 0$ such that $K_1 \subseteq \rho C$. Moreover, since $\rec(K) \subseteq \rec(C) = \rho \ \rec(C)$ we obtain $K = K_1 +\rec(K) \subseteq  \rho C$.

\item Assume that $R(K,C)=0$. Then, by \ref{it:feasible}), $\rec(K)\subseteq \rec(C)$. If $R(K, \rec(C))=\infty$, then \ref{it:feasible}) implies the existence of a point $x \in K$ such that $x\not \in \lin(\rec(C))$. Now, assume without loss of generality that $C_1 \subset \B_2$. Thus, $c+\rho C = c + \rho C_1 + \rec(C) \subset c+ \rho \B_2 + \lin(\rec(C))$ for all $c \in \R^d$ and $\rho > 0$.  Denote the Euclidean distance of $x$ to $\lin(\rec(C))$ by $\bar \rho >0$. Since $x,0\in K$,  we conclude that 
$K \subset c + \rho C$ is possible only if $\rho \ge \frac{\bar\rho}{2} > 0$, 
which contradicts the assumption.

If, on the other hand, there exist $c \in \R^d$ and $\rho^* \ge 0$ such that $K \subseteq c+ \rho^* \rec(C)$, then $K \subset c + \rho \ \rec(C) \subset c+\rho C$ for all $\rho>0$ and therefore $R(K,C)=0$.

\item Since $R(K,C) \in (0,\infty)$, Part~\ref{it:feasible}) and \ref{it:0}) imply $\rec(K) \subseteq \rec(C)$ and $K_1\not \subseteq \lin(\rec(C))$.
Hence there exists $\rho>0$ such that $R(K,C)= R(K_1,C) =R(K_1, C\cap \rho \B_2)$
and therefore by the Blaschke selection theorem \cite[Theorem 1.8.6]{schneider} some  $c \in \R^d$ such that $K_1\subseteq c+ R(K,C)(C\cap \rho_2 \B_2)\subseteq c+R(K,C)C$. Because of $\rec(K)\subseteq \rec(C)$, this implies $K \subseteq c+R(K,C)C$.
\end{enumerate}
\end{proof}

\paragraph{} As an immediate corollary of Lemma~\ref{lem:radiiAttained}, we obtain the following if $K$ and $C$ are bounded.
\begin{cor}
Let $K,C \in \CC^d$ with $0 \in \relint(K) \cap \relint(C)$. Then, 
\begin{enumerate}[a)]
\item $R(K,C) < \infty$ if and only if $K \subset \lin(C)$,
\item $R(K,C)= 0$ if and only if $K$ is a singleton, and
\item if $R(K,C) \neq \infty$, there exists a center $c\in \R^d$ such that $K \subseteq c + R(K,C)C$.
\end{enumerate} 
\label{cor:radiiAttainedBounded}
\end{cor}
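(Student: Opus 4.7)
The plan is to deduce Corollary \ref{cor:radiiAttainedBounded} directly from Lemma \ref{lem:radiiAttained} by specializing to the case where both $K$ and $C$ are compact. Since $K, C \in \CC^d$, both sets are bounded, so their recession cones satisfy $\rec(K) = \rec(C) = \{0\}$. This collapses the recession-cone clauses appearing in the lemma, leaving only the linear-hull conditions to track.

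For part a), the inclusion $\rec(K) \subseteq \rec(C)$ required in Lemma \ref{lem:radiiAttained}(a) is automatic, so the characterization of finiteness reduces to $K \subset \lin(C)$. For part b), Lemma \ref{lem:radiiAttained}(b) states $R(K,C) = 0 \iff R(K,\rec(C)) < \infty$, and with $\rec(C) = \{0\}$ this becomes $R(K,\{0\}) < \infty$. Unpacking the definition of $R(\cdot,\cdot)$, the only way a translate $c + \rho \{0\} = \{c\}$ can contain $K$ is if $K$ is itself a singleton; conversely every singleton trivially has $R(K,\{0\}) = 0$. For part c), if $R(K,C) \in (0, \infty)$ then Lemma \ref{lem:radiiAttained}(c) supplies the center $c$ directly; if instead $R(K,C) = 0$, then part b) forces $K = \{x\}$ for some $x \in \R^d$, and one may simply take $c = x$ so that $K = \{x\} \subseteq x + R(K,C)\cdot C$.

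There is no serious obstacle here; the argument is essentially bookkeeping once Lemma \ref{lem:radiiAttained} is in place. The only step requiring a moment's care is the appearance of the degenerate gauge $\{0\}$ in part b), but since the definition of $R(\cdot,\cdot)$ accommodates any non-empty closed convex set, this is a purely notational matter rather than a conceptual one.
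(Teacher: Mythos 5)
Your argument is correct and is exactly how the paper treats this statement: the paper gives no separate proof, presenting the corollary as an immediate specialization of Lemma~\ref{lem:radiiAttained} to compact $K$ and $C$ with $\rec(K)=\rec(C)=\{0\}$. Your explicit bookkeeping, including the degenerate gauge $\{0\}$ in part b) and the singleton case in part c), fills in precisely the routine details the paper leaves implicit.
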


\paragraph{} In the same way as the circumradius, we introduce the inradius of a convex body $K$ 
with respect to a gauge body $C$. 

\begin{defi}[$C$-inradius]\label{defi:CinRadius}%
Let $K, C\subseteq \R^d$ non-empty, closed, and convex. Then, the \emph{$C$-inradius} $r(K,C)$ of $K$ is the greatest scaling factor $\rho \geq 0$, such that a translate of $\rho C$ is contained in $K$. 
In other words:
\begin{equation*}
r(K,C):= \sup\{\rho\geq 0: \exists c \in \R^d ~s.t.~ c + \rho C \subseteq K\}
\end{equation*}
\end{defi}

\paragraph{} Strictly speaking, there is no need to introduce $r(K,C)$ since it can easily be expressed as
\begin{equation}
r(K,C)= R(C,K)^{-1} ,
\label{eq:inCircumRadius}
\end{equation}
using the conventions $\infty^{-1} = 0$ and $0^{-1}= \infty$ (cf. e.g.~\cite[Section 4.1.2]{hadwigerBuch}). Nevertheless, we keep the notation, as the little $r$, reminiscent of \emph{inradius}, emphasizes the resemblance with the theorems being generalized in the following.

\paragraph{} Whereas the definitions of in- and circumradius are canonical even for asymmetric $C$, there exists more than one generalization of the diameter (see e.g.~\cite{dgk-63,leichtweiss}). At least for our purposes, the following definition seems the most advantageous. 

\begin{defi}[$C$-diameter]
Let $K,C\subseteq \R^d$ non-empty, closed, and convex. We define
 $$ R_1(K,C):= \sup \bigl\{R\bigl([x,y], C\bigr) : x,y\in K\bigr\}$$
as the $C$-radius of the \enquote{longest} segment in $K$ and 
$$D(K,C):= 2R_1(K,C)$$
as the \emph{$C$-diameter} of $K$ (cf. Figure~\ref{fig:diameter}).
\label{defi:Cdiameter}
\end{defi}

\paragraph{} The notation as $R_1(K,C)$ expresses the diameter as the biggest circumradius of 1-dimensional subsets of $K$ and is consistent with the more general core-radii introduced in \cite{coresetPaperDCG}.

\begin{figure}[h]
\centering
\includegraphics[width=0.9\textwidth]{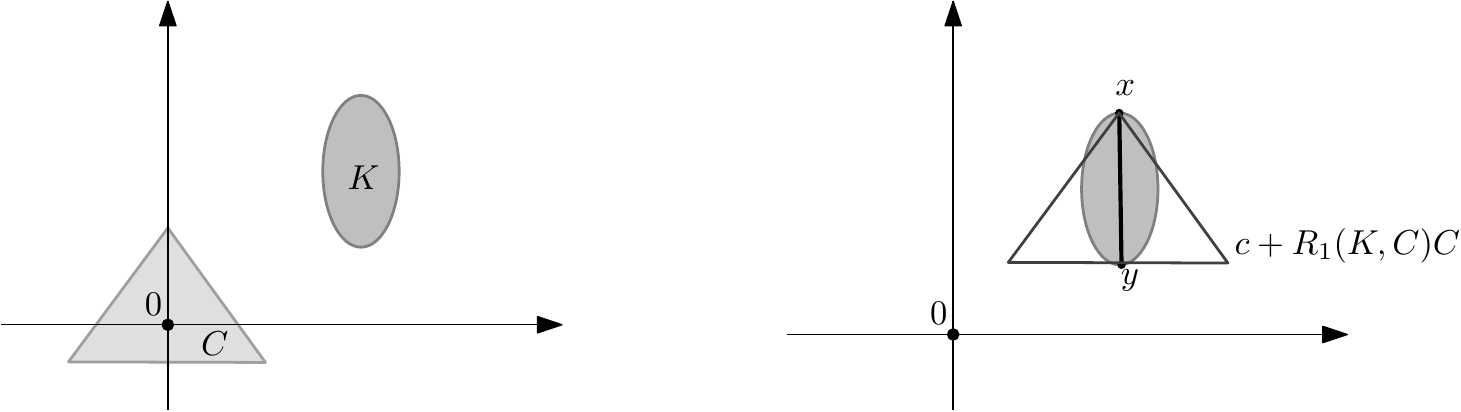}
\caption[The $C$-diameter $K$.]{The $C$-diameter of $K$. Left: $K, C \subseteq \R^2$.  Right: The indicated segment $[x,y]$ has maximal $C$-radius among all line segments contained in $K$. \label{fig:diameter}}
\end{figure}

\paragraph{} Analogously, we define the width for a closed and convex set $K \subseteq \R^d$ with respect to a general gauge body $C\subseteq \R^d$. The idea is to measure the ratio of distances of two parallel hyperplanes that sandwich $K$ and $C$, respectively (cf. Figure~\ref{fig:Cwidth}).


\begin{defi}[$C$-width] \label{defi:s-breadth-and-width}%
Let $K,C \subseteq \R^d$ non-empty, closed, and convex. 

Using the convention that $\frac \alpha \beta := \infty$, 
whenever $\alpha = \infty$ or $\beta = 0$ and $\frac \alpha \beta := 0$,  if $\alpha \neq \beta = \infty$, 
we define

\begin{equation}
r_1(K,C):= \inf \left \{\frac{h(K-K, a)}{h(C-C, a)}: a \in \R^d \setminus\{0\} \right\} .
\label{eq:width}
\end{equation}
and denote by \[w(K,C):= 2 r_1(K,C)\]
the \emph{$C$-width} of $K$.
\end{defi}
\begin{figure}[h]
\centering
\includegraphics[width=0.9\textwidth]{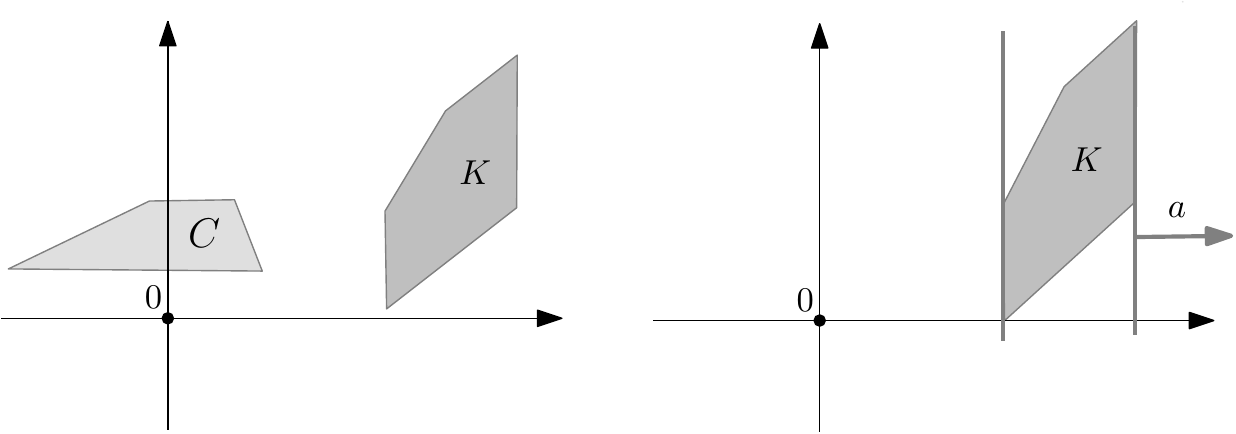}
\caption[The symmetric $C$-width of a convex body $K$]{The $C$-width of $K$. Left: $K,C\subseteq \R^2$. Right: The $C$-width of $K$ is attained for a direction $a$, for which the ratio $h(K-K,a)/h(C-C,a)$ is minimal.\label{fig:Cwidth}}
\end{figure}

\paragraph{} Again, in case $C$ is symmetric, $w(K,C)$ is the (minimal) width of $K$ with respect to $\|\cdot\|_C$ in the usual sense. 


\begin{rem}[Pathological cases] \label{rem:pathological}%
In case that $R(K,C) = \infty$ or equivalently $r(C,K)=0$ the values of $R_1(K,C)$ and 
  $r_1(C,K)$ can take any value within $[0,\infty]$ independently of the former ones
\footnote{Consider e.g.~$K=[0,\infty) \times [-\rho,\rho]$ with 
  $\rho \in [0,\infty]$ and $C=(-\infty,0] \times [-1,1]$.}. However, 
  restricting to $rec(K)=\ls(K)$ and $\rec(C)=\ls(C)$ 
  (which, i.e., is the case if $K$ and $C$ are symmetric) we have
\[  \Bigl\{R(K,C), R_1(K,C), r(C,K), r_1(C,K)\Bigr\} \cap \{0,\infty\}\neq \emptyset \]
\[ \Longrightarrow \quad R(K,C)=R_1(K,C) = r(C,K)^{-1} = r_1(C,K)^{-1} . \]
\end{rem}

\paragraph{} Our first observation is that both the $C$-width and the $C$-diameter remain unaffected if the arguments are symmetrized. This fact allows us to establish a useful identity relating  $R_1(K,C)$ to $r_1(C,K)$.

\begin{lem}[Invariance under symmetrization]\label{lem:diameterWidth}%
Let $K,C \subseteq\R^d $ non-empty, closed, and convex. The following three identities hold

\begin{enumerate}[a)]
\item $\displaystyle r_1(K,C)= r_1\left(\frac12 (K-K), \frac12(C-C)\right)$, \label{it:wSymm} 
\item $\displaystyle R_1(K,C)= R_1\left(\frac12 (K-K), \frac12(C-C)\right)$, and \label{it:Dsymm}
\item $r_1(K,C)=R_1(C,K)^{-1}$ \\
(or equivalently, for the non-pathological cases, $D(K,C)w(C,K)= 4$). \label{it:Dw}
\end{enumerate}
\end{lem}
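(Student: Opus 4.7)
The plan is to exploit two elementary observations. First, for any convex set $A$ one has $A+A=2A$, so $B:=\frac{1}{2}(K-K)$, which is already 0-symmetric, satisfies $B-B=B+B=2B=K-K$; the same reasoning applies to $\frac{1}{2}(C-C)$. Second, since $c+\rho C$ is convex, a segment $[x,y]$ is contained in $c+\rho C$ if and only if both endpoints are, which happens if and only if $y-x\in\rho(C-C)$. Consequently
\[
R([x,y],C) \;=\; \inf\{\rho\geq 0 : y-x\in\rho(C-C)\},
\]
i.e.\ the Minkowski gauge of $C-C$ evaluated at $y-x$, which I will denote $\|y-x\|_{C-C}$.

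With these in hand, part \ref{it:wSymm}) follows directly from the defining formula \eqref{eq:width} for $r_1$: both numerator and denominator depend only on $K-K$ and $C-C$, which are unchanged when $K$ and $C$ are replaced by their symmetrizations. For part \ref{it:Dsymm}), I would combine the two observations above to rewrite $R_1(K,C)=\sup_{v\in K-K}\|v\|_{C-C}$, which again depends only on $K-K$ and $C-C$, and apply the same invariance.

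For part \ref{it:Dw}), set $A:=K-K$ and $B:=C-C$. The goal is to establish
\[
\inf_{a\neq 0}\frac{h(A,a)}{h(B,a)} \;=\; \Bigl(\sup_{v\in B}\|v\|_A\Bigr)^{-1}.
\]
I would use the dual description $\|v\|_A=\sup_{a\neq 0}\frac{a^Tv}{h(A,a)}$, valid for the 0-symmetric convex $A$, and then interchange the two suprema:
\[
\sup_{v\in B}\|v\|_A \;=\; \sup_{a\neq 0}\frac{\sup_{v\in B}a^Tv}{h(A,a)} \;=\; \sup_{a\neq 0}\frac{h(B,a)}{h(A,a)}.
\]
Taking reciprocals and recalling that $R_1(C,K)=\sup_{v\in C-C}\|v\|_{K-K}$ by part \ref{it:Dsymm}) and the segment formula above finishes \ref{it:Dw}).

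The main technical care lies in the pathological cases of Remark~\ref{rem:pathological}: if $K-K$ or $C-C$ fails to be fulldimensional or bounded, some support functions may vanish or blow up and the Minkowski gauge may fail to be finite or attained. I would handle this by consistently using the conventions $\infty^{-1}=0$, $0^{-1}=\infty$ adopted in Definitions~\ref{defi:CinRadius} and \ref{defi:s-breadth-and-width}, and by justifying the swap of suprema in \ref{it:Dw}) directly, since it remains valid for arbitrary nonnegative (possibly infinite) quantities.
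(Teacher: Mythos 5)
Your proof is correct, and for parts \ref{it:Dsymm}) and \ref{it:Dw}) it takes a genuinely different route from the paper. The paper proves \ref{it:Dsymm}) by a two-sided inclusion argument, explicitly matching pairs of points of $K$ (resp.\ $C$) with pairs of points of $\tfrac12(K-K)$ (resp.\ $\tfrac12(C-C)$); your closed-form identity $R_1(K,C)=\sup_{v\in K-K}\|v\|_{C-C}$, obtained from the observation that a segment lies in a translate of $\rho C$ iff the difference of its endpoints lies in $\rho(C-C)$, packages the same idea more structurally and makes the invariance under symmetrization immediate. The real divergence is in \ref{it:Dw}): the paper reduces to the cited identities $R_1=R$ and $r_1=r$ for symmetric bodies and then invokes $r(K,C)=R(C,K)^{-1}$, whereas you prove the duality directly via the polar description $\|v\|_A=\sup_{a\neq 0}a^Tv/h(A,a)$ and a swap of suprema. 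Your version is self-contained (no appeal to \cite{gk-92}) at the cost of needing the dual gauge formula, which requires $A=K-K$ to be closed with $0\in A$; for unbounded $K$ the difference set need not be closed, but passing to its closure changes neither the support function nor the value of $\sup_{v\in B}\|v\|_A$, so this is a cosmetic issue on par with those the paper itself elides in Remark~\ref{rem:pathological}. The interchange of the two suprema and the final passage from $\sup$ of reciprocals to the reciprocal of the $\inf$ are both unconditionally valid for nonnegative extended-real quantities under the stated conventions, so no gap remains.
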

\begin{proof}
First, observe that convex $K\subseteq \R^d$, we have 

$$K-K= \frac12 (K-K) - \frac12 (K-K)$$
Using this identity, \ref{it:wSymm}) follows immediately from the definition of the $C$-width via Equation \eqref{eq:width}.

For the proof of \ref{it:Dsymm}),
let $A \in \{K,C\}$ and $p,q \in A$. Then $p - \frac12(p+q) = \frac12(p-q) \in \frac12(A-A)$ and  $q - \frac12(p+q) = \frac12(q-p) \in \frac12(A-A)$. Thus,  with $A=K$, we obtain that
$R_1(K,C) \le R_1(\frac12(K-K),C)$ and, with $A=C$, that $R_1(K,C) \ge R_1(K,\frac12(C-C))$.

On the other hand, let $p=\frac12(x_p-y_p),q=\frac12(x_q-y_q) \in \frac12 (A-A)$ 
with $x_p,x_q, y_p,y_q \in A$. Then $p+\frac12(y_p+y_q) =\frac12(x_p+y_q)\in A$ and $q+\frac12(y_p+y_q) = \frac12(x_q+y_p) \in A$. Hence it follows $R_1(\frac12(K-K),C) \le R_1(K,C)$ from using $A=K$ and 
$R_1(K,\frac12(C-C)) \ge R_1(K,C)$ from using $A=C$.

For Part~\ref{it:Dw}), we use the well known identities $R_1(K,C)=R(K,C)$ and $r_1(K,C)=r(K,C)$ 
  for symmetric $K$ and $C$ (e.g.~\cite[(1.3)]{gk-92}) and obtain 

{\renewcommand{\arraystretch}{2}
$$\begin{array}{rl}
& R_1(K,C)\stackrel{\ref{it:Dsymm})}{=}R_1\left(\frac12(K-K),\frac12(C-C)\right)
=R\left(\frac12(K-K),\frac12(C-C)\right) \\
\stackrel{\eqref{eq:inCircumRadius}}{=}& r\left(\frac12(C-C),\frac12(K-K)\right)^{-1} 
=r_1\left(\frac12(C-C),\frac12(K-K)\right)^{-1}\stackrel{\ref{it:wSymm})}{=}r_1(C,K)^{-1}.\\
\end{array}$$}
\end{proof}

\subsection{Some specific radii}
\paragraph{} We conclude this section of preparing lemmas by computing some radii of certain convex bodies that will serve to show the tightness of several inequalities in the sequel. Figure~\ref{fig:computingRadii} illustrates the bodies apprearing in Lemmas~\ref{lem:TalphaT} to \ref{lem:convTcupB}.

\begin{lem}[Partial difference bodies of simplices]
Let $S \subseteq \R^d$ a $d$-simplex and $\alpha,\beta \in [0,1]$. Define $C:= S- \alpha S$, $K:=- S +\beta S$. Then,
\begin{equation} \label{it:cRTalphaT} 
R(K,C)= \frac{d+\beta}{1+d\alpha}  \quad \text{ and }  \quad R_1(K,C) = \frac{\beta+1}{\alpha+1}.
\end{equation}
\label{lem:TalphaT}
\end{lem}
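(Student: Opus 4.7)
The plan is to exploit translation invariance of both radii and place the centroid of $S$ at the origin, so that the vertices $v_0,\dots,v_d$ of $S$ satisfy $\sum_{i=0}^{d} v_i = 0$. Setting $a := h(S,u) = \max_i v_i^T u$ and $b := h(-S,u) = -\min_i v_i^T u$, the Minkowski structure of $K$ and $C$ yields $h(K,u) = b + \beta a$ and $h(C,u) = a + \alpha b$, which is the formalism I will use throughout.

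For $R_1(K,C)$ I would apply the symmetrization identity in Lemma~\ref{lem:diameterWidth}. Direct expansion of the Minkowski sums gives $K - K = (1+\beta)(S - S)$ and $C - C = (1+\alpha)(S - S)$, so both $\tfrac12(K-K)$ and $\tfrac12(C-C)$ are positive multiples of the same $0$-symmetric body $S - S$. This immediately yields $R_1(K,C) = (1+\beta)/(1+\alpha)$.

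For $R(K,C)$ I would prove both inequalities. For the upper bound I expect $c = 0$ to be an optimal center and would verify $K \subseteq \rho\, C$ with $\rho = (d+\beta)/(1+d\alpha)$ via support functions: the inclusion is $b + \beta a \leq \rho(a + \alpha b)$ for every direction $u$. Clearing denominators, this is equivalent to $(1 - \alpha\beta)(d\, a - b) \geq 0$, which reduces, since $\alpha\beta \leq 1$, to the simplex estimate $b \leq d\, a$. The latter is immediate from $\sum v_i = 0$: if $v_{i^*}$ attains $\min_i v_i^T u$, the remaining $d$ summands sum to $-v_{i^*}^T u = b$ and each is at most $a$.

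The main step is the matching lower bound, for which I would evaluate the support-function constraint $h(K,u) - c^T u \leq \rho\, h(C,u)$ at the $d+1$ outer facet normals $u_0,\dots,u_d$ of $S$, normalized so that $h(S, u_i) = 1$. Then $v_j^T u_i = 1$ for $j \neq i$ and $v_i^T u_i = -d$ (from $\sum_j v_j^T u_i = 0$), so $h(-S, u_i) = d$, $h(K, u_i) = d + \beta$, and $h(C, u_i) = 1 + d\alpha$. The key identity $\sum_{i=0}^{d} u_i = 0$ (which holds because $\sum_i u_i$ is orthogonal to every $v_j$ and the $v_j$ span $\R^d$) makes the translation $c$ drop out when the $d+1$ constraints are summed, giving $(d+1)(d+\beta) \leq (d+1)\rho(1+d\alpha)$ and hence $\rho \geq (d+\beta)/(1+d\alpha)$. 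The main obstacle is recognizing this cancellation: pairing $u_i$ with $-u_i$ alone only recovers the weaker bound $R_1(K,C) = (1+\beta)/(1+\alpha)$, and it is precisely the asymmetry of $S$ combined with the vanishing-sum identity for the normalized facet normals that sharpens this into the claimed ratio.
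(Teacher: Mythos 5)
Your proof is correct, and it reaches both formulas by a somewhat different route than the paper for the circumradius part. The computation of $R_1(K,C)$ is identical: both arguments reduce to $K-K=(1+\beta)(S-S)$ and $C-C=(1+\alpha)(S-S)$ and invoke Lemma~\ref{lem:diameterWidth}\ref{it:Dsymm}). For $R(K,C)$ the paper uses the same normalized simplex coordinates (facet normals $a_i$ with $a_i^Tx_j=1$ for $i\neq j$ and $a_i^Tx_i=-d$), but proves the upper bound primally: each vertex $-x_i+\beta x_j$ of $K$ is written explicitly as $\frac{d+\beta}{1+d\alpha}\,p-\frac{\alpha(d+\beta)}{1+d\alpha}\,x_i$ and membership $p\in S$ is checked against the $d+1$ facet inequalities. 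Your dual verification via support functions, which collapses the entire upper bound to the single scalar inequality $(1-\alpha\beta)(d\,a-b)\geq 0$ with $b\leq d\,a$ following from $\sum_i v_i=0$, is cleaner and makes transparent exactly where the hypothesis $\alpha,\beta\in[0,1]$ enters. For the lower bound the paper quotes $R(-S+\beta S,S)=d+\beta$ together with $h(S-\alpha S,a_i)=1+d\alpha$ (so that $C$ lies in a $(1+d\alpha)$-scaled copy of $S$), whereas you derive the bound self-containedly by summing the $d+1$ support constraints and using $\sum_i u_i=0$ to eliminate the unknown center; the two arguments are equivalent in substance, but yours avoids appealing to the external fact $R(-S+\beta S,S)=d+\beta$, which would itself be proved by precisely your summation trick.
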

\begin{proof}
Since $R(K,C)$ and $R_1(K,C)$ are invariant under translations of $K$ and $C$, we may assume that there exist $x_1,\dots, x_{d+1}, a_1,\dots, a_{d+1} \in \R^d$ such that 
$$S= \conv \{x_1,\dots, x_{d+1}\} = \bigcap_{i=1}^{d+1} H_\leq (a_i,1),$$ 
where the $a_i$ are numbered such that
$$ a_i^T x_j = \begin{cases} 1 & \text{ if } i \neq j \\ -d & \text{ if } i= j \end{cases} $$
for all $i,j \in [d+1]$.

In a first step we prove $-S+\beta S \subseteq \frac{d+\beta}{1+d\alpha}(S-\alpha S)$, which implies 
$R(-S +\beta S, S-\alpha S)\leq \frac{d+\beta}{1+d\alpha}$. 
For this purpose let $i,j\in [d+1]$ with $i\neq j$ such that  $-x_i + \beta x_j$  is a vertex of $-S +\beta S$. Showing that there exists $p \in S$ such that 
\begin{equation}
-x_i+ \beta x_j = \frac{d+\beta}{1+d\alpha} p - \frac{\alpha(d+\beta)}{1+d\alpha} x_i,
\label{eq:TalphaT}
\end{equation} 
implies that $-x_i +\beta x_j \in \frac{d+\beta}{1+d\alpha}(S-\alpha S)$. Rearranging \eqref{eq:TalphaT} yields that we need
$$ p= \frac{1+d\alpha}{d+\beta}\left(-x_i+ \beta x_j \right)+\alpha x_i .$$
However, with this expression, it is straightforward to verify that $a_i^T p = 1$ and $a_k^T p < 1$ for all $k\in [d+1]\setminus\{i\}$ 
and therefore that $p \in S$.

On the other hand, we have $R(-S+\beta S, S) = d+\beta$ and $h(S -\alpha S,a_i) = 1+d\alpha$ for all $i \in [d+1]$, which implies $R(-S +\beta S, S-\alpha S)\geq \frac{d+\beta}{1+d\alpha}$.

Now consider the diameter:
Since $K-K = (1+\beta)(S-S)$ and $C-C= (1+\alpha)(S-S)$, Lemma~\ref{lem:diameterWidth}\ref{it:Dsymm}) yields 
$$ R_1(K,C)= R_1\left((1+\beta)(S-S), (1+\alpha)(S-S)\right)= \frac{1+\beta}{1+\alpha}.$$
\end{proof}

\begin{figure}[h]
\centering
\includegraphics[width=0.95\textwidth]{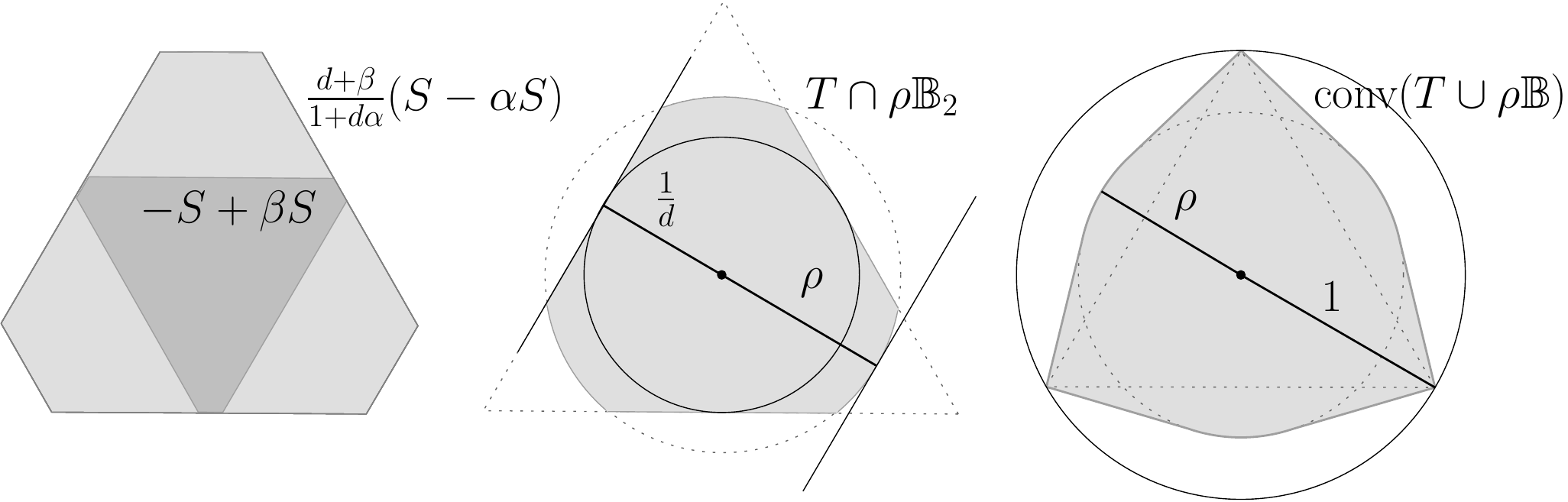}
\caption{The bodies from Lemmas~\ref{lem:TalphaT} to \ref{lem:convTcupB}. Left: Two partial differences of a (regular) simplex with $0< \alpha < \beta < 1$ and $S- \alpha S$ optimally scaled to contain $-S+\beta S$. Middle: Intersection of a regular simplex $T$ and a ball of radius $\rho$ with indications for its inradius and width. Right: Convex hull of a regular simplex $T$ and a ball of radius $\rho$ with indications for its circumradius and diameter. \label{fig:computingRadii}}
\end{figure}

\begin{lem}[Regular simplex intersected with a ball] \label{lem:simplexCapBall}%
Let $T\subseteq \B_2$ a regular simplex with all its vertices on the Euclidean unit sphere, $\rho \in\left[\frac{1}{d}, 1\right]$, and $K:= T\cap \rho \B_2.$
Then,
$$ R(-K, K)= d\rho, \quad r(K,\B_2)= \frac{1}{d}, \quad \text{and} \quad  r_1(K,\B_2)= \min \left\{r_1(T,\B_2), 
\frac{1}{2}\left(\rho +\frac{1}{d}\right)\right\}.$$
If further $C=  T\cap \rho_2 \B_2$ with $\rho_2\leq \rho$. Then,
$$ R(K,C)= \frac{\rho}{\rho_2} \quad \text{ and } \quad R(C,K)=1.$$
\end{lem}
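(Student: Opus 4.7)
\emph{Setup.} Place $T$ in standard position: vertices $v_1,\ldots,v_{d+1}\in\S_2$ with $\sum_i v_i=0$ and $v_i\cdot v_j=-1/d$ for $i\ne j$, so that $T=\{y\in\R^d:\,v_i\cdot y\ge -1/d\text{ for all }i\in[d+1]\}$; the unit outward normal to the facet opposite $v_i$ is $-v_i$, and $0$ is both the centroid and the Euclidean incenter of $T$. I will repeatedly use three witnesses: (i) $\rho v_i\in K$ (direct verification of the halfspace description, using $\rho\le 1$); (ii) the foot of the perpendicular $-v_i/d\in K$ (norm $1/d\le\rho$); and (iii) the identity $\sum_i v_i=0$, which cancels unknown centers after summing support-function inequalities over $i\in[d+1]$.

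\emph{Inradius, Minkowski asymmetry, and comparison with $C$.} Since $\rho\ge 1/d$, the inball $\tfrac{1}{d}\B_2$ of $T$ sits in $\rho\B_2$ and hence in $K$, giving $r(K,\B_2)\ge 1/d$; the reverse is immediate from $K\subseteq T$. For $R(-K,K)\le d\rho$ I take center $0$: for $x\in -K$, Cauchy--Schwarz yields $v_i\cdot x\ge -\|x\|\ge -\rho$, so $x\in d\rho T$, and $-K\subseteq\rho\B_2\subseteq d\rho^2\B_2$ (using $\rho\ge 1/d$ once more) closes the inclusion $-K\subseteq d\rho K$. For the reverse bound, witness (i) gives $h(-K,-v_i)=h(K,v_i)=\rho$ and witness (ii) gives $h(K,-v_i)=1/d$; the inclusion $-K\subseteq c+\rho' K$ thus forces $\rho\le -v_i\cdot c+\rho'/d$, and summing via witness (iii) yields $\rho'\ge d\rho$. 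The identical scheme treats $(C,K)$: $C\subseteq K$ is trivial (from $\rho_2\le\rho$), while the $-v_i$-supports (both equal $1/d$, since $\rho_2\ge 1/d$) force $R(C,K)\ge 1$; and $K\subseteq T\subseteq (\rho/\rho_2)T$ together with $K\subseteq\rho\B_2=(\rho/\rho_2)\rho_2\B_2$ gives $K\subseteq (\rho/\rho_2)C$, while the $v_i$-supports $h(K,v_i)=\rho$, $h(C,v_i)=\rho_2$ furnish the matching lower bound for $R(K,C)$.

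\emph{Width.} From $K\subseteq T$ one has $r_1(K,\B_2)\le r_1(T,\B_2)$, and the direction $a=v_i$ yields $h(K-K,v_i)=h(K,v_i)+h(K,-v_i)=\rho+1/d$, so $r_1(K,\B_2)\le(\rho+1/d)/2$. For the matching lower bound I need $h(K,a)+h(K,-a)\ge \min\{2r_1(T,\B_2),\rho+1/d\}$ for every unit $a$, and split cases according to whether the ``sphere witnesses'' $\pm\rho a$ lie in $T$. If both do, the sum is $2\rho\ge\rho+1/d$. If exactly one does (say $\rho a\in T$, so $h(K,a)=\rho$), then $h(K,-a)$ is attained on the spherical cap cut by the facet opposite $v_{i^+}$ with $i^+=\argmax_j a\cdot v_j$, and equals $\alpha^+/d+\sqrt{(1-(\alpha^+)^2)(\rho^2-1/d^2)}$ with $\alpha^+=\max_j a\cdot v_j>1/(\rho d)$; the desired bound $h(K,-a)\ge 1/d$ then reduces after squaring to $(\rho d)(\rho d+1)\ge 2$, which holds since $\rho d\ge 1$.

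\emph{Main obstacle.} The remaining doubly-clipped case, where both supports are cap optima, is the real difficulty: here I apply the cap formula on both sides and combine it with the minimum-width identity $\alpha^+-\alpha^-\ge w(T,\B_2)=1+1/d$ for the regular simplex to reduce the required inequality to a one-variable trigonometric estimate in the parameters $s=-\alpha^-,\,t=\alpha^+\in(1/(\rho d),1]$, which, using $\rho\le 1$, delivers the lower bound $\min\{2r_1(T,\B_2),\rho+1/d\}$.
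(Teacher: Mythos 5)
Your treatment of $R(-K,K)$, $r(K,\B_2)$, $R(K,C)$ and $R(C,K)$ is correct, and in fact more self-contained than the paper's: where the paper asserts optimality via $r(T,\B_2)=\tfrac1d$ and invokes external containment criteria from the core-set paper for $R(C,K)=1$, you get all lower bounds by pairing the witnesses $\rho v_i$ and $-v_i/d$ with the cancellation $\sum_i v_i=0$. That part stands.

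The width identity, however, is not proved. Two concrete problems. First, your ``cap formula'' $h(K,-a)=\alpha^+/d+\sqrt{(1-(\alpha^+)^2)(\rho^2-1/d^2)}$ is the maximum of $-a^Tx$ over $\{x:\|x\|_2\le\rho,\ v_{i^+}^Tx\ge-\tfrac1d\}$, i.e.\ over $K$ with all facet constraints except one discarded. Its maximizer $x^*=-\tfrac1d v_{i^+}+\sqrt{\rho^2-1/d^2}\,u^*$ need not satisfy $v_j^Tx^*\ge-\tfrac1d$ for $j\ne i^+$: one has only $v_j^Tx^*\ge\tfrac1{d^2}-\sqrt{(\rho^2-\tfrac1{d^2})(1-\tfrac1{d^2})}$, which can drop below $-\tfrac1d$ as soon as $d\ge3$. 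So the formula is in general only an \emph{upper} bound on $h(K,-a)$, while your argument needs a lower bound. (The singly-clipped case it is meant to settle is actually immediate without it: $\tfrac1d\B_2\subseteq K$ gives $h(K,u)\ge\tfrac1d$ for every unit $u$, hence $h(K,a)+h(K,-a)\ge\rho+\tfrac1d$ whenever $\rho a\in T$.) Second, the doubly-clipped case --- the only genuinely nontrivial one, since there the inball bound only yields $2/d$ --- is announced but not carried out, and the sketch rests on the identity $w(T,\B_2)=1+\tfrac1d$, which is false for $d\ge2$: that quantity is the breadth of $T$ in a facet-normal direction, whereas its minimal width is strictly smaller (at most $2/\sqrt d$ for odd $d$ by Steinhagen). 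The paper closes this case differently: it argues that a minimal-width chord of $K$ is a double normal, so if $r_1(K,\B_2)<r_1(T,\B_2)$ the width must be attained between a hyperplane supporting $K$ in the relative interior of a facet of $T$ and the antipodal point of $\rho\S_2$, forcing the direction to be a facet normal $-v_i$ and the breadth to be $\rho+\tfrac1d$. Some argument of this kind (or a corrected analytic estimate valid for points that actually lie in $K$) is needed before the width claim can be considered established.
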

\begin{proof}
 As $\rho\geq \frac{1}{d}$ and $r(T, \B_2)=\frac{1}{d}$,
 $$-K \subseteq \rho \B_2  \subseteq d\rho T \cap d \rho^2 \B_2= d\rho K.$$
Again, since $r(T, \B_2)=\frac{1}{d}$, this scaling is best possible. 
Hence, $R(-K, K)= d\rho$.

Further, since $\rho\geq \frac{1}{d}$, $r(K, \B_2)= r(T,\B_2)= \frac{1}{d}$. And, if $r_1(K,\B_2)< r_1(T, \B_2)$, then, because of $\rho\geq \frac{1}{d}$, the width of $K$ is attained between a pair of hyperplanes supporting $T$ in a point $x$ in the relative interior of a facet of $T$ and $-\rho x$, respectively. Hence,
$$r_1(K,\B_2)= \min \left\{r_1(T,\B_2),\frac{1}{2} \left(\rho +\frac{1}{d}\right)\right\}.$$ 

For the  second statement, we immediately obtain $R(K,C)= R(\rho\B_2,\rho_2\B_2)= \frac{\rho_2}{\rho}$ by the definiton of $K$ and $C$.
And finally, since $\rho_2 \leq \rho$, $C \subseteq K$ and $C$ touches all facets of $T$. Since these are also facets of $K$, $R(C,K)=1$ by Corollary 2.4 and Theorem 2.3 in \cite{coresetPaperDCG}. 
\end{proof}

\begin{lem}[Convex hull of a regular simplex and a ball]\label{lem:convTcupB}%
Let $T\subseteq \B_2$ a regular simplex with all its vertices on the Euclidean unit sphere, $\rho \in \left[\frac{1}{d}, 1\right]$ and $K:= \conv\bigl(T\cup \rho \B_2\bigr).$ Then,
$$ R(-K, K) =\frac{1}{\rho},\quad R(K,\B_2)=1 \quad \text{ and } \quad R_1(K,\B_2)= \max\left\{R_1(T, \B_2), \frac{1+\rho}{2}\right\}.$$
\end{lem}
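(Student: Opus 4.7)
I will treat the three identities in turn.

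The circumradius identity $R(K,\B_2)=1$ is handled by sandwiching $K$ between $T$ and $\B_2$: since $\rho\le 1$, both $T\subseteq \B_2$ and $\rho\B_2\subseteq \B_2$, hence $K=\conv(T\cup\rho\B_2)\subseteq \B_2$, giving $R(K,\B_2)\le 1$; the matching lower bound follows from $T\subseteq K$ together with the standard fact $R(T,\B_2)=1$ (the regular simplex $T$ has circumcenter $0$ and vertices on $\S_2$).

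For $R(-K,K)=1/\rho$, the upper bound comes from a direct containment argument. Writing $\rho(-K)=\conv(-\rho T\cup\rho^2\B_2)$, the vertices of $-\rho T$ all have Euclidean norm $\rho$ and so lie in $\rho\B_2\subseteq K$, while $\rho^2\B_2\subseteq\rho\B_2\subseteq K$ (using $\rho\le 1$); convexity of $K$ then gives $\rho(-K)\subseteq K$, i.e. $R(-K,K)\le 1/\rho$. The lower bound is the delicate part. Assuming $-K\subseteq c+\lambda K$ and passing to support functions yields $h(K,-v)\le c^Tv+\lambda h(K,v)$ for all $v\in\R^d$. Evaluating at $v=-x_i$, with $x_1,\dots,x_{d+1}$ the vertices of $T$ (so $\|x_i\|_2=1$), and using $h(K,x_i)=\max(1,\rho)=1$ together with $h(K,-x_i)=\max(1/d,\rho)=\rho$ (where the latter uses $\rho\in[1/d,1]$ and the standard computation $h(T,-x_i)=1/d$) produces the $d+1$ inequalities $1\le -c^T x_i+\lambda\rho$. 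Summing these over $i$ and invoking $\sum_{i=1}^{d+1}x_i=0$ (regular simplex centred at the origin) makes the $c$-contribution vanish and yields $\lambda\ge 1/\rho$. I expect this collective summation to be the main obstacle, since a single direction yields only the weaker bound $\lambda\ge 1$; the $d+1$ vertex directions have to be combined to eliminate the translate $c$.

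Finally, for $R_1(K,\B_2)=\max\{R_1(T,\B_2),(1+\rho)/2\}$, the definition reduces the claim to computing half the Euclidean diameter of $K$. Since $(x,y)\mapsto\|x-y\|_2$ is convex in each variable, its maximum on $K\times K$ is attained at a pair of extreme points of $K$, and every such extreme point lies in $\{x_1,\dots,x_{d+1}\}\cup\rho\S_2$. A three-case analysis then closes the upper bound: two vertices of $T$ are at distance at most $2R_1(T,\B_2)$; a vertex $x_i$ and a point $y\in\rho\S_2$ are at distance at most $\|x_i\|_2+\|y\|_2=1+\rho$ by the triangle inequality; and two points on $\rho\S_2$ are at distance at most $2\rho\le 1+\rho$. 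Matching lower bounds are witnessed by a diameter-realising pair of vertices of $T$ and by the pair $(x_i,-\rho x_i)\in K\times K$ (note that $-\rho x_i\in \rho\B_2\subseteq K$), respectively.
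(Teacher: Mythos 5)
Your proposal is correct, and all three computations check out. The arguments for $R(K,\B_2)=1$ and for the diameter are essentially those of the paper (the paper is terser on the diameter, simply asserting that if the diameter exceeds that of $T$ it is realised by a pair $(x,-\rho x)$; your reduction to extreme points of $K$ and the three-case analysis is a cleaner justification of the same fact). The genuine difference is in the lower bound $R(-K,K)\ge 1/\rho$: the paper does not prove this directly but appeals to an external optimality criterion for containment under homothetics (Theorem 2.3 of the cited core-set paper, applied using $\ext(T)\subseteq\S_2$), whereas you give a self-contained argument via support functions, deriving $h(K,x_i)\le -c^Tx_i+\lambda\,h(K,-x_i)$ for each vertex direction and summing over $i\in[d+1]$ so that $\sum_i x_i=0$ kills the translation term. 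Your route buys independence from the external reference at the cost of an explicit computation of $h(K,\pm x_i)$ (which is where the hypothesis $\rho\ge 1/d$ enters, via $h(K,-x_i)=\max\{1/d,\rho\}=\rho$); the paper's route is shorter but leans on machinery the reader must look up. One cosmetic point: when you write $\ext(K)\subseteq\{x_1,\dots,x_{d+1}\}\cup\rho\S_2$ you are using that extreme points of a convex hull of a union of compact convex sets come from the extreme points of the constituents; this is standard but worth a half-sentence.
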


\begin{proof}
We have $\frac{1}{\rho} K= \conv\left(\frac{1}{\rho}T \cup \B_2\right)$. Since $-T \subseteq \B_2$ and $\rho\leq 1$, it follows that $-K \subseteq \frac{1}{\rho} K$. Optimality of this inclusion is easily verifiable by \cite[Theorem 2.3]{coresetPaperDCG}, since $\mathrm{ext}(T) \subseteq \S_2$. This shows $R(-K, K) =\frac{1}{\rho}$.
Further, by definition of $K$ we have $R(K,\B_2)=1$. 

If $R_1(K, \B_2) > R_1(T, \B_2)$, then, because of $\rho \leq 1$, the diameter of $K$ is attained between a vertex $x$ of $T$ and $-\rho x$. Hence,
$$R_1(K,\B_2)= \max\left\{R_1(T, \B_2), \frac{1+\rho}{2}\right\}.$$
\end{proof}

\section{Asymmetry Measures}
\label{sec:MinkAsymmDefSub}

\subsection{Minkowski Asymmetry}
\paragraph{} There is a rich variety of measurements for the asymmetry of a convex body; see \cite{gruenbaum-63} (and in particular Section 6) 
for an overview. It is already claimed in \cite{gruenbaum-63}  that the one which has received most interest is Minkowski's measure of symmetry.
Its reciprocal measures the extent to which $K$ needs to be scaled in order to contain a translate of $-K$ 
(cf.~\cite[Notes for Section 3.1]{schneider}), which in our terminology, is the $K$-radius of $-K$. 
For short, We call the latter value, being large for \enquote{very asymmetric} sets, the \emph{Minkowski asymmetry} of $K$.

\begin{defi}[Minkowski asymmetry]\label{defi:MinkowskiAsymm}%
Let $K\subseteq \R^d$, non-empty, closed, and convex. We denote by
\begin{equation} \label{eq:minkowskiAsymm}
 s(K):= R(-K,K)
\end{equation}
the \emph{Minkowski asymmetry} of $K$.

Further, if $c \in \R^d$ is such that $-(K-c) \subseteq s(K)(K-c)$, we call $c$ a \emph{Minkowski center} of $K$, and if $0$ is a Minkowski center of $K$, we say that the body $K$ is \emph{Minkowski centered} (cf. Figure~\ref{fig:minkAsymm}).
\end{defi}

\begin{figure}[h]
\centering
\includegraphics[width=0.9\textwidth]{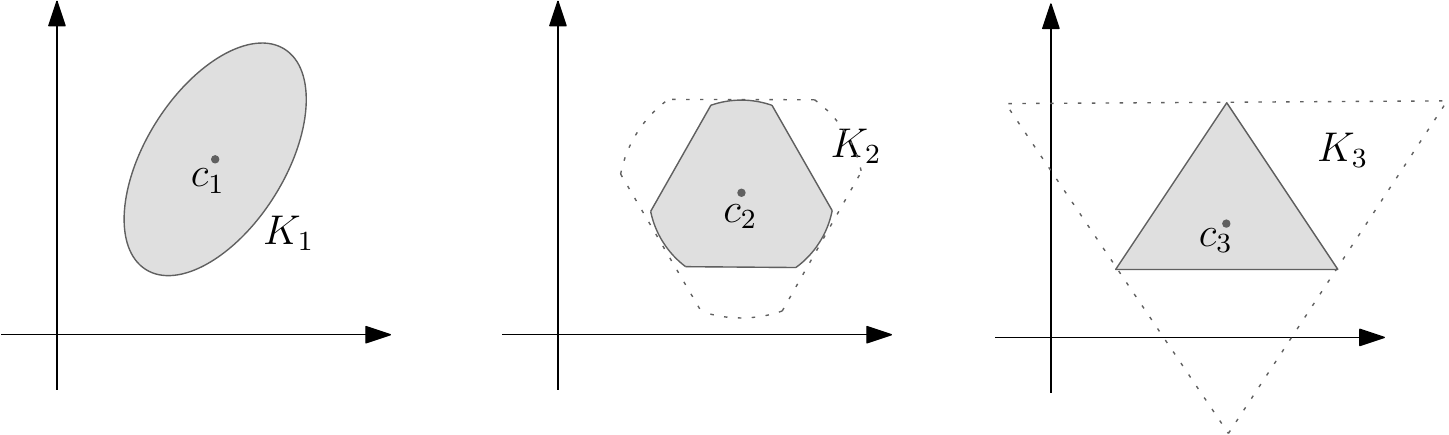}
\caption[Planar examples with different Minkowski asymmetry]{Planar examples with different Minkowski asymmetry. Left: $K_1$ is symmetric, $s(K_1)=1$ and its Minkowski center is $c_1$. Middle: $K_2$ with $s(K_2)=3/2$ and Minkowski center $c_2$. Right: A 2-simplex $K_3$ with 
  $s(K_3)=2$ and Minkowski center $c_3$. The suitable homothetics of $-K_2$ and $-K_3$ containing $K_2$ and $K_3$, respectively, are indicated in dotted gray. \label{fig:minkAsymm} }
\end{figure}

\paragraph{} In all three examples in Figure~\ref{fig:minkAsymm}, the Minkowski center of $K_i$ is contained in $K_i$, $i=1,2,3$, a property which is also true in general as the following lemma shows.

\begin{lem}[Minkowski center is inside $K$]
Let $K \subseteq \R^d$ non-empty, closed, and convex and  $c \in \R^d$ a Minkowski center of $K$. Then,
$$c \in \relint(K).$$
\end{lem}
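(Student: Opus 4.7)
The plan is to translate so that the Minkowski center is the origin; by the definition of Minkowski center we then have $-K \subseteq sK$ with $s := s(K)$, and the claim reduces to $0 \in \relint(K)$. The degenerate case $s = 0$ forces $K$ to be a singleton (the convention underlying Definition~\ref{defi:Cradius} together with Lemma~\ref{lem:radiiAttained} gives this for the closed convex sets considered here), in which case $K = \{0\} = \relint(K)$ and nothing is to show. So I may assume $s > 0$.

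First I would establish $0 \in K$. For any $x \in K$, the inclusion $-K \subseteq sK$ produces some $y \in K$ with $-x = sy$, hence $-x/s \in K$. The convex combination $\tfrac{1}{s+1}x + \tfrac{s}{s+1}(-x/s) = 0$ therefore lies in $K$ by convexity.

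Next, to upgrade $0 \in K$ to $0 \in \relint(K)$, I would suppose for contradiction that $0$ lies in the relative boundary. Since $0 \in K$ we have $\aff(K) = \lin(K) =: L$, so $K$ is full-dimensional in $L$. A supporting hyperplane to $K$ at $0$ within $L$ then yields an $a \in L \setminus \{0\}$ with $a^T x \leq 0$ for all $x \in K$; full-dimensionality prevents $K$ from lying entirely in $\{a^T x = 0\}$, so some $y \in K$ satisfies $a^T y < 0$. Applying $-K \subseteq sK$ to $y$ furnishes $z \in K$ with $sz = -y$, and $a^T z = -a^T y/s > 0$ contradicts the supporting hyperplane inequality. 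The only genuine subtlety I see is separating out the $s = 0$ edge case so that the divisions by $s$ above are legitimate; the remainder is one clean application of the supporting hyperplane theorem.
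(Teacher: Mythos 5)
Your proof is correct and follows essentially the same route as the paper: a supporting-hyperplane contradiction combined with the inclusion $-K \subseteq s(K)K$. The paper's version is slightly leaner because it does not first establish $0\in K$ or split off $s=0$: from $0\notin\int(K)$ alone (whether $0$ is a boundary point or outside $K$) one gets $a\neq 0$ with $a^Tx\leq 0$ on $K$, and then $-K\subseteq s(K)K$ forces $a^Tx=0$ on all of $K$, contradicting full-dimensionality.
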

\begin{proof}
Without loss of generality we may assume $\int(K) \neq \emptyset$ and $c=0$. For a contradiction suppose $0 \notin \int(K)$. 
  Then there exists $a \in \R^d\setminus\{0\}$ such that $a^T x \leq 0$ for all $x\in K$. Since $-K \subseteq s(K)K$, we obtain $a^T x = 0$ for all $x\in K$, which contradicts $\int(K) \neq \emptyset$.
\end{proof}

\paragraph{} For unbounded $K$, the following statement can be obtained from Lemma~\ref{lem:radiiAttained} (cf.~\cite[Appendix A]{bf-08})

\begin{rem}[Asymmetry for unbounded convex sets]
We have $R(-K,K)= 0$ if and only if $K$ is an affine subspace and $R(-K,K)= \infty$ if and only if $\rec(K)$ is not
a linear subspace. The latter means that cylinders $K= K_1 + F$, with $F$ a linear subspace and 
$K_1 \subset F^\bot$ a non-singleton compact convex set, are the only unbounded sets with 
Minkowski asymmetry different from $0$ and $\infty$ and for them $s(K)= s(K_1)$ holds.
\label{rem:infiniteSymmetry}
\end{rem}

\paragraph{} Because of Remark~\ref{rem:infiniteSymmetry} we henceforth assume $K \in \CC^d$.

\paragraph{} In contrast to the three examples in Figure~\ref{fig:minkAsymm}, 
for an arbitrary $K \in \CC^d$, it can happen that the Minkowski center is not unique and even that the 
set of centers is of dimension up to $d-2$
as indicated by Figure~\ref{fig:MinkCenterNotUnique} and proved in \cite{gruenbaum-63}.

\begin{figure}[h]
\centering
\includegraphics[width=0.4\textwidth]{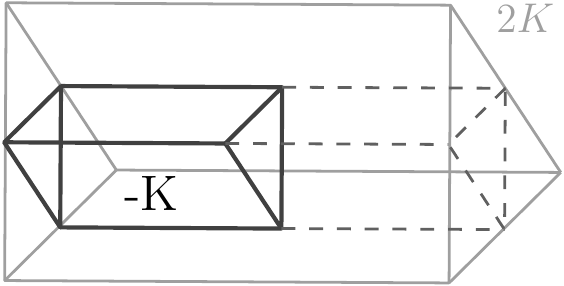}
\caption{Let $T \subseteq \R^2$ denote a regular triangle. Then, for  $K= T \times [-1,1] \subseteq \R^3$,} 
  we enforce $s(K)\geq 2$ by Proposition~\ref{prop:asymm}. However, in direction of the third coordinate the dilatation is twice as much as needed and therefore the Minkowski center of $K$ is not unique.\label{fig:MinkCenterNotUnique}
\end{figure}

\paragraph{}  The following proposition states the well-known bounds on $s(K)$. A proof in the notation that is used here can be found in \cite{coresetPaperDCG}. 

\begin{prop}[Bounds on the Minkowski asymmetry] \label{prop:asymm}%
For $K \in \CC^d$, 
$$ 1 \leq s(K) \leq d,$$ 
with $s(K)=1$ if and only if $K$ is symmetric, and $s(K)=d$ if and only if $K$ is a $d$-simplex.

\end{prop}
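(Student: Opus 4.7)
The plan is to treat the two inequalities separately and then handle each equality case. The lower bound $s(K)\ge 1$ follows from a volume argument: working in $\aff(K)$ if $K$ is not fulldimensional, $-K$ is a congruent copy of $K$ and hence $\vol(-K)=\vol(K)$, so any inclusion $-K\subseteq c+\rho K$ forces $\vol(K)\le \rho^d\vol(K)$, giving $\rho\ge 1$. For the equality case, if $K$ is symmetric about some $c_0$ then $-(K-c_0)=K-c_0$ immediately yields $s(K)\le 1$; conversely, $s(K)=1$ means that $-K$ is contained in a translate of $K$ of equal volume, forcing the two bodies to coincide, which makes $K$ symmetric.

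For the upper bound I would use Minkowski's classical centroid inequality. First I would translate $K$ so that its centroid lies at the origin, and then prove $h(K,-a)\le d\, h(K,a)$ for every $a\in\R^d\setminus\{0\}$, which is equivalent to $-K\subseteq dK$. Fixing a unit direction $a$, I would consider the slice-volume function $f(t):=\vol_{d-1}(K\cap\{x:a^Tx=t\})$; Brunn's theorem gives that $f^{1/(d-1)}$ is concave on its support $[\alpha,\beta]$. Combining this concavity with the centroid condition $\int t\, f(t)\,dt=0$ yields the elementary estimate $-\alpha\le d\beta$, which is precisely the claimed bound on the support function. Hence $s(K)\le d$ with the centroid as a Minkowski center.

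It remains to characterize equality in the upper bound. The direction \emph{$d$-simplex implies $s(K)=d$} is immediate from \eqref{it:cRTalphaT} of Lemma~\ref{lem:TalphaT} with $\alpha=\beta=0$, which gives $R(-S,S)=(d+0)/(1+0)=d$; alternatively, one checks directly that if $\sum_i x_i=0$ then each vertex $-x_i$ of $-S$ equals $d\cdot\tfrac{1}{d}\sum_{j\ne i}x_j$, the inner expression being the centroid of the opposite facet and therefore in $S$. The converse---that $s(K)=d$ forces $K$ to be a $d$-simplex---is the main obstacle. The natural approach is to trace equality through the slicing argument above: $-\alpha=d\beta$ for every direction $a$ forces $f^{1/(d-1)}$ to be affine on its support, meaning that every orthogonal slicing exhibits $K$ as a cone over one of its end-slices. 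This strong rigidity condition, applied across sufficiently many directions, forces $K$ to possess exactly $d+1$ extreme points in affinely general position. Rather than reprove this delicate classical characterization, I would at this step invoke the detailed treatment in \cite{coresetPaperDCG}.
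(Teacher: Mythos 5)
Your proposal does substantially more than the paper, which offers no proof at all for this proposition and simply refers to \cite{coresetPaperDCG}. The parts you actually prove are essentially correct: the volume argument for $1\le s(K)$ works (with exponent $\dim K$ rather than $d$ for lower-dimensional bodies, and excluding the degenerate singleton), the characterization of $s(K)=1$ is fine once you use Corollary~\ref{cor:radiiAttainedBounded} to know the infimum is attained, and the centroid/Brunn slicing argument for $s(K)\le d$ is the classical Minkowski--Radon proof; the one-crossing comparison of $f$ with the profile of a cone of equal volume does give $-\alpha\le d\beta$. Two small blemishes: the centroid witnesses $-K\subseteq c+dK$ but is in general \emph{not} a Minkowski center (already for a half-disc the two points differ), and your ``alternative direct check'' for the simplex only shows $-S\subseteq dS$, i.e.\ $s(S)\le d$; the reverse inequality needs the facet normals $a_i$ (using $\sum_i a_i=0$ and $h(-S,a_i)=d$) or the appeal to Lemma~\ref{lem:TalphaT}, which does cover both directions.

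The genuine problem is the sketch for the implication $s(K)=d\Rightarrow K$ is a simplex. The premise ``$-\alpha=d\beta$ for every direction $a$'' is \emph{not} a consequence of $s(K)=d$: it would mean $h(-K,a)=d\,h(K,a)$ for all $a$, and this fails even for the regular simplex $T$ centered at its centroid --- taking $a=x_1$, a vertex direction (so $x_i^Tx_j=-1/d$ for $i\ne j$), gives $h(-T,x_1)=\frac1d=\frac1d\,h(T,x_1)$, the opposite extreme of the ratio. So the proposed rigidity (``every orthogonal slicing exhibits $K$ as a cone over an end-slice'') cannot be read off direction by direction; $s(K)=d$ only forces equality in the directions singled out by an optimality certificate for the containment $-K\subseteq c+s(K)K$ (the touching condition of \cite[Theorem~2.3]{coresetPaperDCG}), and converting that certificate into ``$K$ is a simplex'' is precisely the nontrivial content of the cited proof. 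Since you end up invoking \cite{coresetPaperDCG} for this implication anyway --- which is all the paper itself does for the entire proposition --- the statement is covered, but the intermediate reasoning you present as ``the natural approach'' is incorrect and should be removed rather than left as a purported route to the result.
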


\paragraph{} Next, we turn to the computability of the Minkowski asymmetry. 

For an introduction to the study of the computational complexity of radii and containment problems, 
we refer to \cite{brandenbergRoth-10}, \cite{eaves-freund-82}, \cite{freund-orlin-85}, \cite{gk-93}.
The following Lemma may be derived from the above references or the explicit proof in \cite{bf-08}:

\begin{lem}[Computability]
Let $K \in \CC^d$ be a rational polytope given in $\CH$- or $\CV$-presentation.
Then $s(K)$ and a Minkowski center $c \in \R^d$ such that $-(K-c) \subseteq s(K)(K-c)$ can be computed in polynomial time. 
\label{lem:computeSC}
\end{lem}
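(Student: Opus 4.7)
The plan is to recast the computation of $s(K)$ and of a Minkowski center as a polynomial-size linear program in the input data, which is then solved in polynomial time by any rational LP algorithm (e.g.\ Khachiyan's ellipsoid method). By the very definition $s(K)=R(-K,K)$, and Corollary~\ref{cor:radiiAttainedBounded}\,c) guarantees the infimum is attained; so it suffices to find the smallest $\rho \geq 0$ for which there exists $c\in\R^d$ with $-K\subseteq c+\rho K$. The optimal $\rho$ will be $s(K)$ and the optimal $c$ will be a Minkowski center.

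In the $\CV$-case, where $K=\conv\{v_1,\dots,v_n\}$, the inclusion $-K\subseteq c+\rho K$ is equivalent to $-v_i\in c+\rho K$ for every $i\in[n]$. I would encode the latter by auxiliary nonnegative variables $\mu_{ik}\geq 0$, writing $-v_i-c=\sum_{k}\mu_{ik}v_k$ together with $\sum_{k}\mu_{ik}=\rho$ for every $i$. Minimizing $\rho$ over $(\rho,c,(\mu_{ik}))\in\R_{\geq 0}\times\R^d\times\R^{n^2}_{\geq 0}$ subject to these equalities is then an LP of size polynomial in the $\CV$-input.

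In the $\CH$-case, with $K=\bigcap_{j=1}^{m}H_\leq(a_j,\beta_j)=\{x:Ax\leq b\}$, the translate satisfies $c+\rho K=\{x:a_j^T(x-c)\leq \rho\beta_j\}$, so $-K\subseteq c+\rho K$ is equivalent to $h(-K,a_j)\leq a_j^Tc+\rho\beta_j$ for every $j\in[m]$. Each $h(-K,a_j)=h(K,-a_j)$ is itself the optimal value of a linear program over $K$, and, since $K$ is nonempty and bounded, LP duality furnishes $h(K,-a_j)=\min\{b^Ty_j:A^Ty_j=-a_j,\ y_j\geq 0\}$. Introducing variables $y_j\in\R^m_{\geq 0}$ for each $j$, I would replace the (unknown) constants $h(-K,a_j)$ by $b^Ty_j$, yielding the single LP $\min\{\rho:b^Ty_j\leq a_j^Tc+\rho\beta_j,\ A^Ty_j=-a_j,\ y_j\geq 0,\ \rho\geq 0\}$ in the variables $(\rho,c,y_1,\dots,y_m)$, again of polynomial size.

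The main obstacle is the $\CH$-case, where the containment condition must be linearized without converting to $\CV$ (which could blow up exponentially); the LP-duality trick outlined above handles this cleanly. Once either LP is solved in polynomial time, $s(K)$ is the optimal objective value and $c$ is the optimal center variable, completing the proof.
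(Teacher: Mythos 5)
Your overall strategy coincides with the paper's: formulate $s(K)=R(-K,K)$ as the optimization problem $\min\{\rho\ge 0:\ \exists c,\ -K\subseteq c+\rho K\}$, note via Proposition~\ref{prop:asymm} and Lemma~\ref{lem:radiiAttained}\ref{it:centerExists}) that an optimal $(c^*,\rho^*)$ exists, and solve a polynomial-size LP. The paper delegates the explicit LP formulations to \cite{brandenbergRoth-10}, whereas you write them out yourself; both of your encodings (the convex-combination variables $\mu_{ik}$ in the $\CV$-case and the LP-duality substitution $h(-K,a_j)=\min\{b^Ty_j: A^Ty_j=-a_j,\ y_j\ge 0\}$ in the $\CH$-case) are correct and are essentially what that reference does.

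There is, however, one concrete error: the optimal translation variable $c^*$ of your LP is \emph{not} a Minkowski center, so your algorithm outputs the wrong point. By Definition~\ref{defi:MinkowskiAsymm}, a Minkowski center $c$ satisfies $-(K-c)\subseteq s(K)(K-c)$, which unfolds to $-K\subseteq -(s(K)+1)c+s(K)K$; comparing with $-K\subseteq c^*+s(K)K$ shows that the point to return is $c=-\frac{1}{s(K)+1}c^*$, not $c^*$. For instance, for the triangle $K=\conv\{0,e_1,e_2\}\subseteq\R^2$ one has $s(K)=2$, the Minkowski center is the centroid $\frac13(e_1+e_2)$, but the optimal LP translation is $c^*=-(e_1+e_2)$. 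The paper performs exactly this conversion, verifying
\[ - \left(K+\tfrac{1}{s(K)+1} c^*\right) \subseteq - \tfrac{1}{s(K)+1}c^* + \bigl(c^*+ s(K) K\bigr) = s(K)\left( K+ \tfrac{1}{s(K)+1}c^* \right). \]
With that one-line correction (and the harmless normalization needed to invoke Corollary~\ref{cor:radiiAttainedBounded}), your argument is complete and matches the paper's.
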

\begin{proof}
Using \eqref{eq:Cradius}, the computation of $s(K)= R(-K,K)$ requires the solution of the following optimization problem: 
\begin{equation} \label{eq:CradiusLP}
\begin{array}{rl}
 \min &  \rho \\
 s.t.	&  -K \subseteq c + \rho K \\
			&	 c \in \R^d \\
			&	 \rho \geq 0.
\end{array}
\end{equation}

By Proposition~\ref{prop:asymm}, and Lemma~\ref{lem:radiiAttained}\ref{it:centerExists}), there exists a solution $(c^*,\rho^*) \in \R^d \times [1,d]$

of \eqref{eq:CradiusLP}.  By definition, $s(K)= \rho^*$ and we have that $c= -\frac{1}{s(K)+1} c^*$ is a Minkowski center of $K$, as
$$ - \left(K+\frac{1}{s(K)+1} c^*\right) \subseteq - \frac{1}{s(K)+1}c^* + (c^*+ s(K) K) =s(K)\left( K+ \frac{1}{s(K)+1}c^* \right).$$
Now, \cite{brandenbergRoth-10} demonstrates that the computation of $R(K,C)$ amounts to solving a Linear Program if $K$ and $C$ are both given 
in $\CH$-presentation or both given in $\CV$-presentation. 
Hence, in both cases, $s(K)=R(-K,K)$ and a respective Minkowski center can be computed in polynomial time.
\end{proof}

\paragraph{} Lemma~\ref{lem:computeSC} can also be used to decide whether a polytope $K$ in $\CH$- or $\CV$-presentation is symmetric and to compute its center of symmetry in this case. This yields an alternative proof for \cite[Theorem 2.2]{gk-93}.

\subsection{John and Loewner Asymmetry}
\paragraph{} We also consider centered versions of asymmetry of a convex body $K$, i.e.\ we are interested in the minimal dilatation factor needed to cover $-(K-c_0)$ with a copy of $K-c_0$ for some $c_0 \in \R^d$ depending on $K$, but not free to be chosen for the optimal covering. 
For a general study of symmetry values as a function of $c_0$, we refer to \cite{bf-08}. Here, we focus on the presumably most natural  choices, the center of the maximum volume ellipsoid inscribed $K$ and the center of the minimal volume ellipsoid containing $K$.
Measuring the symmetry of $K$ around these centers nicely interacts with John's Theorem \cite{john}: on the one hand, the classic formulation of John's Theorem can be used to bound this centered asymmetries of a body as in Corollary~\ref{cor:boundsOnS0}. On the other hand, we will use the centered asymmetries in Theorem~\ref{theo:strongJohn} to sharpen John's Theorem itself. Because of its importance in this context, we give an explicit statement of John's Theorem in Proposition~\ref{prop:john} and refer to \cite{keithBall-john, ball-intro2CG, gruber-schuster} for proofs.

When talking about John's Theorem, we usually assume that $K$ is full dimensional, i.e.\ without loss of generality $K \in \CC_0^d$.  One may use the usual identification $\aff(K)\cong\R^{\dim(K)}$ to extend the results to lower-dimensional bodies.

\begin{prop}[John's Theorem]
For any $K \in \CC_0^d$ there exists a unique ellipsoid of maximal volume contained in $K$, which is $\B_2$
if and only if 
\begin{enumerate}[(1)]
\item $\B_2 \subseteq K$, and 
\item for some $k \in \left\{d+1,\dots, \frac{d(d+3)}{2}\right\}$, there are points $p_1,\dots, p_k \in \bd(K) \cap \S_2$ and scalars $\lambda_1,\dots, \lambda_k > 0$ such that
\begin{equation}
0= \sum_{i=1}^k \lambda_i p_i \quad \text{ and } \quad I= \sum_{i=1}^k \lambda_i p_ip_i^T \ .
\label{eq:john}
\end{equation}
\end{enumerate} 
Moreover, if $\B_2$ it the ellipsoid of maximal volume contained in $K$, then $K \subseteq d \, \B_2$ in general and $K \subseteq \sqrt{d} \, \B_2$, if $K$ is 0-symmetric.
\label{prop:john}
\end{prop}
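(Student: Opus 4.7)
The plan is to split Proposition~\ref{prop:john} into three parts and attack each separately: (i) existence and uniqueness of the maximum volume inscribed ellipsoid, (ii) the John decomposition as a characterization of when this ellipsoid is $\B_2$, and (iii) the circumradius bounds $K \subseteq d\B_2$ in general and $K \subseteq \sqrt{d}\,\B_2$ for $0$-symmetric $K$.

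For (i), I would parameterize inscribed ellipsoids as $E_{A,c}:= c + A\B_2$, where $A$ is a symmetric positive definite $d\times d$ matrix and $c\in\R^d$. The volume of $E_{A,c}$ is $\det(A)\,\vol(\B_2)$, so maximizing the volume amounts to maximizing $\log\det A$, which is strictly concave on the cone of symmetric positive definite matrices. Since $K$ is compact and full-dimensional, the feasible set $\{(A,c) : E_{A,c}\subseteq K\}$ is nonempty (as $K$ contains a small ball around any interior point), convex (the containment $c + A\B_2 \subseteq K$ is a family of linear inequalities in $(A,c)$), and compact. Strict concavity then forces a unique maximizer.

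For (ii), necessity: assuming $\B_2$ is the optimum, I would derive the John decomposition from first-order optimality. The constraint $E_{A,c} \subseteq K$ is equivalent to the infinite system $h(E_{A,c}, a) \leq h(K,a)$ for $a\in \S_2$, and at the optimum only the touching points $p_i \in \bd K \cap \S_2$, where the tangent hyperplane to $\B_2$ at $p_i$ also supports $K$, yield active constraints with outward normal $p_i$ itself. A standard KKT/saddle-point argument on $\max\log\det A$ under these inequalities produces nonnegative Lagrange multipliers $\lambda_i$ with $\sum\lambda_i p_i = 0$ (stationarity in $c$) and $\sum\lambda_i p_i p_i^T = I$ (stationarity in $A$, using that the derivative of $\log\det A$ at $A=I$ is $I$ itself). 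Carath\'eodory's theorem inside $\R^d \oplus \mathrm{Sym}(d)$, which has dimension $d + d(d+1)/2 = d(d+3)/2$, allows pruning to at most $d(d+3)/2$ contact points; on the other hand one needs at least $d+1$ since any fewer would leave an affine direction along which $\B_2$ could be enlarged inside $K$.

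For sufficiency (the converse in (ii)) and also for the bound in (iii), I would run a common type of argument. Each $p_i$ is a point of tangency of $\B_2$ and $K$, so $K \subseteq H_{\leq}(p_i, 1)$, i.e.~$p_i^T x \leq 1$ for every $x\in K$. In the $0$-symmetric case the symmetry of $K$ yields $|p_i^T x| \leq 1$, and plugging into $\|x\|^2 = x^T I x = \sum_i \lambda_i (p_i^T x)^2$ together with $\sum_i \lambda_i = \trace(I) = d$ gives
\begin{equation*}
\|x\|^2 \;\leq\; \sum_{i=1}^k \lambda_i \;=\; d,
\end{equation*}
so $K\subseteq \sqrt{d}\,\B_2$. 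For the asymmetric bound $K\subseteq d\B_2$ this naive estimate fails since $p_i^T x$ may be arbitrarily negative. I would instead use the classical variational argument: if some $v\in K$ had $\|v\|>d$, then $\conv(\B_2\cup\{v\})\subseteq K$, and one can exhibit an explicit ellipsoid inside $\conv(\B_2\cup\{v\})$ whose volume strictly exceeds $\vol(\B_2)$ by stretching slightly along $v$ and shrinking in the orthogonal directions, contradicting maximality. Constructing this stretched ellipsoid and verifying the volume inequality is the main technical obstacle, but it reduces to a one-dimensional optimization along the segment from $0$ to $v$. The same variational idea also yields sufficiency in (ii): if the John decomposition held but some $E_{A,c}\subseteq K$ had $\det A \geq 1$ with $(A,c)\neq(I,0)$, summing the inequalities $p_i^T c + \|A p_i\|\leq h(K,p_i)=1$ with weights $\lambda_i$ would yield $\sum_i \lambda_i \|Ap_i\|\leq d$ via $\sum\lambda_i p_i = 0$ and $\sum\lambda_i = d$, and a Jensen / AM-GM inequality on the $\|Ap_i\|$ combined with $\sum\lambda_i p_i p_i^T = I$ would force $\det A \leq 1$ with equality only at $A=I$, a contradiction.
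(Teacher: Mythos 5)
The paper does not prove Proposition~\ref{prop:john}: it is quoted as the classical theorem of John, with proofs explicitly deferred to the cited references. So there is no internal proof to compare against, and your sketch has to be judged as a reconstruction of the standard argument --- which it essentially is, with one genuine gap and one avoidable detour.

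The gap is in part (i). The claim that ``strict concavity forces a unique maximizer'' does not work as stated: the objective $\log\det A$ does not depend on the center $c$ at all, so it is not strictly concave on the feasible set $\{(A,c): c+A\B_2\subseteq K\}$ (also, the constraints $a^Tc+\|Aa\|_2\le h(K,a)$ are convex, not linear, in $(A,c)$ --- harmless for convexity of the feasible set, but worth stating correctly). Strict concavity of $\log\det$ pins down the shape $A$ of an optimizer, not its center. The standard repair is either the ``two unit balls'' argument (if $\B_2$ and $c+\B_2$ with $c\ne 0$ are both inscribed, their convex hull contains an ellipsoid of strictly larger volume obtained by stretching along $c$), or --- as your own part (ii) implicitly provides --- normalize one optimizer to $\B_2$, derive \eqref{eq:john}, and then run your sufficiency/equality analysis to conclude that any inscribed ellipsoid of volume at least $\vol(\B_2)$ equals $\B_2$. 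You should say explicitly that uniqueness comes from this, not from strict concavity.

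The detour is the bound $K\subseteq d\,\B_2$, for which you fall back on the variational ``stretch an ellipsoid inside $\conv(\B_2\cup\{v\})$'' computation and flag it as the main technical obstacle. That route works but is unnecessary once \eqref{eq:john} is available: for $x\in K$ set $\mu_i:=\lambda_i(1-p_i^Tx)\ge 0$; then $\sum_i\mu_i=d$ (using $\sum_i\lambda_i=\trace(I)=d$ and $\sum_i\lambda_ip_i=0$) and $\sum_i\mu_ip_i=-\bigl(\sum_i\lambda_ip_ip_i^T\bigr)x=-x$, so $-x/d\in\conv\{p_1,\dots,p_k\}\subseteq\B_2$ and $\|x\|_2\le d$. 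This is the same style of manipulation the paper itself uses in the proof of Theorem~\ref{theo:strongJohn} and avoids the one-dimensional volume optimization entirely. The remaining pieces of your sketch --- KKT for necessity, Carath\'eodory in $\R^d\oplus\mathrm{Sym}(d)$ for $k\le d(d+3)/2$, the count $k\ge d+1$, the chain $d\ge\sum_i\lambda_i\|Ap_i\|_2\ge\trace(A)\ge d(\det A)^{1/d}$ for sufficiency, and the $\sqrt d$ bound in the $0$-symmetric case --- are correct.
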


\begin{defi}[John asymmetry] 
Let $K \in \CC_0^d$ and $c_K$ the center of the ellipsoid of maximal volume contained in $K$.
We define
$$ s_0(K):= \min \{\rho \geq 0: -(K -c_K) \subseteq \rho(K - c_K)\}$$
as the asymmetry of $K$ around the center of its maximum volume inscribed ellipsoid and call it \emph{John asymmetry}. 

\label{defi:centeredAsymmetry}
\end{defi}

\paragraph{} As already mentioned, one may use John's Theorem to obtain the same bounds on $s_0(K)$ as on $s(K)$ (cf.~\cite[p. 248]{gruenbaum-63}).

\begin{cor}[Bounds on the John asymmetry]
Let $K \in \CC_0^d$. Then,
$$ 1 \leq s_0(K) \leq d$$
with equality if and only if $K$ is symmetric in the first case and if and only if $K$ is a $d$-simplex in the latter case.

\label{cor:boundsOnS0}
\end{cor}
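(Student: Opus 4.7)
My plan is to exploit the affine invariance of $s_0$, reduce to a standard normalization via John's Theorem, and then obtain the lower bound by a volume argument and the upper bound by a direct inclusion. The equality cases will be tied back to Proposition~\ref{prop:asymm}, with the reverse direction of the upper bound equality being the main obstacle.

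First, I would observe that $s_0$ is affinely invariant: the maximum volume inscribed ellipsoid is unique and equivariant under invertible affine maps, so $c_K$ transforms equivariantly; moreover, $R(-(K-c_K), K-c_K)$ is invariant under invertible affine transformations of $K$. After applying a suitable affine map we may therefore assume $c_K = 0$ and that the maximum volume inscribed ellipsoid of $K$ equals $\B_2$. Under this normalization, Proposition~\ref{prop:john} yields $\B_2 \subseteq K \subseteq d\,\B_2$.

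For the lower bound, the inclusion $-K \subseteq s_0(K)K$ together with $\vol(-K) = \vol(K)$ gives $s_0(K)^d \vol(K) \geq \vol(K)$, hence $s_0(K) \geq 1$. Equality forces $-K = K$, so $K$ is symmetric about $c_K = 0$. Conversely, for any symmetric $K$ with center of symmetry $c$, uniqueness of the maximum volume inscribed ellipsoid implies that it is invariant under reflection through $c$, so $c_K = c$ and hence $s_0(K) = 1$. For the upper bound, the chain $-K \subseteq d\,\B_2 \subseteq dK$, obtained from $K \subseteq d\,\B_2$ and $\B_2 \subseteq K$, yields $s_0(K) \leq d$. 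The easy direction of the equality characterization, that $K$ being a $d$-simplex implies $s_0(K) = d$, follows from $s(K) \leq s_0(K) \leq d$ combined with $s(K) = d$ from Proposition~\ref{prop:asymm}.

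The main obstacle is the converse $s_0(K) = d \Rightarrow K$ is a simplex, since $s(K) \leq s_0(K)$ only gives the inequality $s(K) \leq d$, which is automatic. My approach would be to use the John decomposition in \eqref{eq:john}: each contact point $p_i \in \bd(K) \cap \S_2$ produces a supporting hyperplane $\{x : p_i^T x = 1\}$ of $K$, so $K \subseteq \bigcap_i H_{\leq}(p_i, 1)$. The tightness $s_0(K) = d$ combined with $-K \subseteq dK$ should force $K \subseteq \bigcap_i H_{\leq}(-p_i/d, 1)$ as well, and more importantly should constrain the extreme points of $K$ to lie among the points $\{-d p_i\}$. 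Using the balancing condition $\sum \lambda_i p_i = 0$ to pin down that exactly $d+1$ contact points effectively contribute, one should conclude that $K$ is the simplex on these vertices. Making this rigorous is the delicate step, and for a self-contained proof it may be cleanest to invoke the equality analysis from Gr\"unbaum's survey~\cite{gruenbaum-63}.
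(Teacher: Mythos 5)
The paper offers no proof of this corollary at all: it merely remarks that one may use John's Theorem to obtain the same bounds on $s_0(K)$ as on $s(K)$ and cites Gr\"unbaum's survey, so your write-up is already more detailed than the source. The affine normalization, the volume argument for $1 \le s_0(K)$ together with its equality case (compact convex bodies of equal volume, one contained in the other, must coincide; and for symmetric $K$ the uniqueness of the inscribed ellipsoid forces $c_K$ to be the center of symmetry), the chain $-K \subseteq d\,\B_2 \subseteq d\,K$ for the upper bound, and the deduction of $s_0(K)=d$ for simplices from $s(K)\le s_0(K)\le d$ and Proposition~\ref{prop:asymm} are all correct and complete.

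The genuine gap is exactly where you flag it: the implication $s_0(K)=d \Rightarrow K$ is a $d$-simplex. Your sketch does not work as stated. The inclusion $K \subseteq \bigcap_i H_{\leq}(-p_i/d,1)$ is an automatic consequence of $-K \subseteq dK \subseteq \bigcap_i H_{\leq}(p_i,d)$ and so carries no information extracted from tightness; and the assertion that tightness constrains the extreme points of $K$ to lie among $\{-d\,p_i\}$ is unsupported --- note also that $k$ may be as large as $d(d+3)/2$, so the claim that exactly $d+1$ contact points ``effectively contribute'' is itself something that must be proved, not read off from the balancing condition. A cleaner reduction is available: if $K$ is not a simplex, the equality case of John's outer bound gives $K \subseteq \rho\,\B_2$ for some $\rho < d$ (with $\B_2$ still the John ellipsoid), whence $-K \subseteq \rho\,\B_2 \subseteq \rho K$ and $s_0(K)<d$; but that equality characterization is not part of the paper's Proposition~\ref{prop:john} and requires its own nontrivial argument. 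Falling back on the equality analysis in Gr\"unbaum's survey, as you propose, is legitimate --- and is in effect exactly what the paper itself does --- but as written your contact-point argument is a heuristic, not a proof.
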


\paragraph{} As for the Minkowski asymmetry, the John asymmetry is computable for suitably presented polytopes.

\begin{lem}[Computability of the John asymmetry]
If $K \subseteq \R^d$ is a polytope in $\CH$-presentation, $s_0(K)$ can be approximated to any accuracy in polynomial time.

\label{lem:computeS0}
\end{lem}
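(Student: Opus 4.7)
The plan is to split the task into two sub-computations: first, approximate the John center $c_K$; second, evaluate the asymmetry around that (approximate) center by invoking the LP-based machinery of Lemma~\ref{lem:computeSC}.

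For the first step, recall that the maximum volume inscribed ellipsoid of a polytope $K=\bigcap_{i=1}^{m} H_\leq(a_i,b_i)$ can be parametrized as $\{Bu+c:\|u\|_2\leq 1\}$ with $B\in\R^{d\times d}$ symmetric positive semidefinite. Containment in $K$ is equivalent to the finitely many convex constraints $\|B a_i\|_2 + a_i^T c \leq b_i$, $i\in[m]$, while maximizing the volume amounts to maximizing $\log\det B$, a concave objective. Hence determining $c_K$ and $B_K$ is a standard convex program of size polynomial in the input, and by the ellipsoid method (or interior-point methods for semidefinite/convex programming) one can compute an approximation $\tilde c\in\R^d$ with $\|\tilde c-c_K\|_2\leq \varepsilon$ in time polynomial in the input size and $\log(1/\varepsilon)$.

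For the second step, observe that $s_0(K) = R(-(K-c_K),K-c_K)$, so once $c_K$ is available the value is the circumradius of $-(K-c_K)$ with respect to the gauge body $K-c_K$. Given $\tilde c$, the translates $K-\tilde c$ and $-(K-\tilde c)$ are both polytopes in $\CH$-presentation (the facet normals are unchanged and the offsets shift linearly in $\tilde c$), so by Lemma~\ref{lem:computeSC} (or rather, by the LP formulation \eqref{eq:CradiusLP} underlying it, applied with $-(K-\tilde c)$ in place of $-K$ and $K-\tilde c$ in place of $K$) we can compute $\tilde s := R(-(K-\tilde c),K-\tilde c)$ exactly by solving a single linear program of polynomial size.

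It remains to bound $|\tilde s - s_0(K)|$ in terms of $\varepsilon$. The map $c\mapsto R(-(K-c),K-c)$ is continuous on $\int(K)$, and by Corollary~\ref{cor:boundsOnS0} the values $s_0(K),\tilde s$ lie in $[1,d]$. One can make the continuous dependence quantitative: for $c$ close to $c_K$ the inclusion $-(K-c)\subseteq \tilde\rho (K-c)$ with $\tilde\rho = s_0(K)+O(\varepsilon/r)$ (where $r$ is any fixed inradius of $K$ ensuring $c_K$ is well inside $K$) is readily verified by translating an optimal covering for $c_K$ and absorbing the translation into a mild rescaling. This yields $|\tilde s - s_0(K)| = O(\varepsilon)$, so choosing $\varepsilon$ polynomially small in the desired accuracy produces a polynomial-time approximation of $s_0(K)$ to any prescribed precision.

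The main obstacle is the first step: $c_K$ is generally irrational, so exact computation is out of reach and one must rely on a convex-programming approximation together with the quantitative stability estimate above. Once these are in place, combining the approximate John center with the exact LP of Lemma~\ref{lem:computeSC} gives the claim.
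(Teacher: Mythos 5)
Your proof follows the same route as the paper: approximate the center of the maximal volume inscribed ellipsoid (the paper simply cites Khachiyan--Todd for this step rather than writing out the convex program), then compute the asymmetry about that center via the linear program of Lemma~\ref{lem:computeSC}. Your additional quantitative stability estimate --- that an $\varepsilon$-perturbation of the center changes the centered asymmetry only by $O(\varepsilon/r)$ --- is a detail the paper's proof leaves implicit; it is correct and worth having, provided one also notes that the inradius $r$ of a full-dimensional rational $\CH$-polytope is bounded below by a quantity of polynomial encoding length, so that the required $\varepsilon$ stays polynomially encodable.
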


\begin{proof}
First, we mention that $\aff(K)$ is efficiently computable for both representations of $K$. Hence we may assume without loss of generality that $K$ is fulldimensional.
In \cite{khachiyanTodd-complexityEllipsoid}, it is shown that for a polytope $K\subseteq \R^d$ in $\CH$-presentation, the center of the ellipsoid of maximal volume contained in  $K$ can be approximated  to any accuracy in polynomial time. An approximation of this center at hand, call it $c_K$, we can compute $\min\{\rho \geq 0: -(K -c_K) \subseteq \rho(K - c_K)\}$  via Linear Programming analogously to the Linear Program in the proof of Lemma~\ref{lem:computeSC}.
\end{proof}

\begin{rem}[Loewner asymmetry]
One could also measure the asymmetry of a body $K$ around its Loewner center, i.e.\ the center of the volume minimal enclosing ellipsoid of $K$. With the same arguments as for the John center, the values of this asymmetry measure are also contained in the interval $[1,d]$. Moreover, for a $\CV$-presented polytope $K \subseteq \R^d$, this center can be approximated to any accuracy in polynomial time \cite{khachiyanTodd-complexityEllipsoid} and therefore the asymmetry around the Loewner center can be approximated efficiently for $\CV$-polytopes by the same argument as in the proof of Lemma~\ref{lem:computeS0}.

\label{rem:Loewner}
\end{rem}

\section{The Inequalities of Bohnenblust and Leichtweiß}
\label{sec:bohnenblust}
\paragraph{} The present section gives generalizations of the inequalities of Bohnenblust \cite{bohnenblust-38} and Leicht\-weiß \cite{leichtweiss}
and shows that these generalizations are actually one and the same inequality unifying the two old theorems.

First, we prove a version of Bohnenblust's Inequality for general convex bodies with the ratio of the $C$-radius and $C$-diameter bounded in terms of the Minkowski asymmetry of $K$ and $C$. 

 
\paragraph{A note on pathological cases.} For all the geometric inequalities that follow, we assume $K,C \in \CC^d$. As a consequence of Proposition~\ref{prop:asymm}, all the right hand sides in the inequalities are therefore well defined. 
In view of Remark~\ref{rem:pathological}, one may at least 
extend the validity of these inequalities to the cases with $\ext(K),\ext(C)$ bounded and 
$rec(K)=\ls(K)$ and $\rec(C)=\ls(C)$ by presuming the ratios $0/0$ or $\infty/\infty$ to be 1 here.

\begin{theo}[Sharpening Bohnenblust's Inequality] \label{theo:bohnenblust}%
Let $K,C\in \CC^d$. Then,
\begin{equation}
 \frac{R(K, C)}{R_1(K, C)}  \leq \frac{(s(C)+1)s(K)}{s(K)+1}
 \label{eq:bohnenblust}
\end{equation}
 and for every $\sigma_K, \sigma_C \in [1,d]$, there exist bodies $K$ and $C$ with $s(K)= \sigma_K$, $ s(C)= \sigma_C$ 
such that \eqref{eq:bohnenblust} is tight for $K$ and $C$.
\end{theo}
\begin{proof}
  Suppose without loss of generality that $R_1(K,C)=1$, i.e.\ Corollary~\ref{cor:radiiAttainedBounded} ensures that for all $p_1, p_2 \in K$, 
  there is a $c\in \R^d$, such that $p_1,p_2 \in c + C$; explicitly, $p_1= c+v$ and $p_2= c+w$ with $v,w \in C$. 
  Hence $p_1-p_2= v-w \in C-C$ for all $p_1,p_2 \in K$ 
  and thus $K-K \subset C-C$. Using Proposition~\ref{prop:asymm}, 
  there exist $c_K,c_C \in \R^d$, such that
  \[c_K + \left(1+ \frac{1}{s(K)}\right)K =  K + c_K + \frac{1}{s(K)} K \subset K-K \subset C-C \subseteq 
  C + c_C + s(C)C = c_C + (1+s(C))C \] and therefore
  \[\frac{R(K,C)}{R_1(K,C)} \le \frac{s(C)+1}{1+ 1/s(K)} = \frac{(s(C)+1)s(K)}{s(K)+1} .\]

For the tightpness of the inequality, let $S \subseteq \R^d$ be a simplex, 
$\alpha := \frac{\sigma_C -d}{1-\sigma_C d}$, $\beta:=\frac{\sigma_K -d}{1-\sigma_K d}$, 
$C:= S - \alpha S$, and $K:= -S +\beta S$.
By Lemma~\ref{lem:TalphaT}
, $R(K,C)= \frac{d+\beta}{1+d\alpha}$, $s(C)= \sigma_C$ and $s(K)= \sigma_K$.
By Lemma~\ref{lem:TalphaT}
, $R_1(K,C) =\frac{1+\beta}{1+\alpha}$. Together, we obtain
$$ \frac{R(K,C)}{R_1(K,C)} = \frac{(d+\beta)(\alpha+1)}{(1+d\alpha)(\beta+1)}= \frac{(s(C)+1)s(K)}{s(K)+1}.$$
\end{proof}

\paragraph{} Because of Lemma \ref{lem:diameterWidth}, we have $R_1(K,C) = R_1(K-K,C-C) = 
R(K-K,C-C)$. Using this fact, one may also read the inequality in 
Theorem \ref{theo:bohnenblust} as an inequality between the $C$-radius of $K$ and its symmetrization
in both arguments. In this light, it is not surprising that the inequality can be tightened by bounding 
the asymmetry of the two sets.

\begin{rem}[Bohnenblust's Inequality with John asymmetry]
Since $s(K) \leq s_0(K)$ and $s(C)\leq s_0(C)$, a version of Theorem~\ref{theo:bohnenblust} 
with $s(K), s(C)$ replaced by $s_0(K), s_0(C)$ would be weaker but still valid and still 
sharpening Bohnenblust's original inequality.
As one may deduce from Proposition~\ref{prop:john}, it stays tight for the families of $K$ and $C$ as given in the proof above.
\end{rem}

\paragraph{} Note that the statement of Theorem~\ref{theo:bohnenblust} is different from the version proved by Leichtweiß in \cite{leichtweiss}. In his proof of Bohnenblust's Inequality, Leichtweiß shows an inequality which involves a different diameter definition which is strongly dependent on the position of the gauge body $C$ (cf.~\cite[Section 6]{dgk-63} for a discussion of Bohnenblust's Inequality for both diameter alternatives).

\paragraph{} Besides the fact that it is invariant under translations of $C$, the diameter/width definition which we employ has the advantage that Leichtweiß's Inequality no longer needs a seperate proof, but is the direct dual to Bohnenblust's Inequality.

\begin{cor}[Sharpening Leichtweiß's Inequality] \label{cor:leichtweiss}%
  For $K,C\in \CC^d$, we have
\begin{equation}
 \frac{r_1(K,C)}{r(K,C)} \le \frac{(s(K)+1)s(C)}{s(C)+1} 
 \label{eq:leichtweiss}
\end{equation}  
and for every $\sigma_K, \sigma_C \in [1,d]$, there exist bodies $K$ and $C$ with $s(K)= \sigma_K$, $ s(C)= \sigma_C$ such that \eqref{eq:leichtweiss} is tight for $K$ and $C$.
\end{cor}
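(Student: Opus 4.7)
The plan is to deduce Corollary~\ref{cor:leichtweiss} directly from Theorem~\ref{theo:bohnenblust} by invoking the duality identities established in Lemma~\ref{lem:diameterWidth} and equation~\eqref{eq:inCircumRadius}. First I would observe that
\[
\frac{r_1(K,C)}{r(K,C)} \;=\; \frac{R_1(C,K)^{-1}}{R(C,K)^{-1}} \;=\; \frac{R(C,K)}{R_1(C,K)},
\]
where the first equality combines $r_1(K,C)=R_1(C,K)^{-1}$ from Lemma~\ref{lem:diameterWidth}\ref{it:Dw}) with $r(K,C)=R(C,K)^{-1}$ from~\eqref{eq:inCircumRadius}. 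Applying Theorem~\ref{theo:bohnenblust} with the roles of the two bodies interchanged (i.e.\ bounding $R(C,K)/R_1(C,K)$ instead of $R(K,C)/R_1(K,C)$) yields
\[
\frac{R(C,K)}{R_1(C,K)} \;\leq\; \frac{(s(K)+1)\,s(C)}{s(C)+1},
\]
which, together with the preceding identity, gives~\eqref{eq:leichtweiss}.

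For the tightness assertion, I would again lean on the corresponding part of Theorem~\ref{theo:bohnenblust}: given $\sigma_K,\sigma_C \in [1,d]$, that theorem produces bodies $X,Y \in \CC^d$ with $s(X)=\sigma_C$ and $s(Y)=\sigma_K$ for which $R(X,Y)/R_1(X,Y)$ equals its Bohnenblust bound $(s(Y)+1)s(X)/(s(X)+1) = (\sigma_K+1)\sigma_C/(\sigma_C+1)$. Setting $C:=X$ and $K:=Y$, one has $s(K)=\sigma_K$ and $s(C)=\sigma_C$ as required, and the duality computation above transports the tightness of~\eqref{eq:bohnenblust} for $(X,Y)=(C,K)$ into tightness of~\eqref{eq:leichtweiss} for $(K,C)$. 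Concretely, this amounts to recycling the simplex construction from the proof of Theorem~\ref{theo:bohnenblust} with the parameters $\alpha$ and $\beta$ interchanged.

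Since all technical work was carried out in Theorem~\ref{theo:bohnenblust}, there is no substantive obstacle here; the only care needed is in the symbolic bookkeeping of which asymmetry occupies which argument slot after the swap (note that the right-hand sides of~\eqref{eq:bohnenblust} and~\eqref{eq:leichtweiss} differ precisely by swapping $s(K)$ and $s(C)$, which mirrors the swap of arguments induced by the duality). The pathological configurations flagged in Remark~\ref{rem:pathological} are covered by the note preceding Theorem~\ref{theo:bohnenblust}, under which the involved ratios are interpreted through $0/0=\infty/\infty=1$ so that the identities $r_1(K,C)=R_1(C,K)^{-1}$ and $r(K,C)=R(C,K)^{-1}$ remain meaningful.
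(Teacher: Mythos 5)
Your proposal is correct and follows exactly the route of the paper's own proof: reduce to Theorem~\ref{theo:bohnenblust} via the duality identities $r(K,C)=R(C,K)^{-1}$ and $r_1(K,C)=R_1(C,K)^{-1}$, and obtain tightness by swapping the roles of $K$ and $C$ in the simplex construction. You merely spell out the bookkeeping of which asymmetry lands in which slot more explicitly than the paper does.
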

\begin{proof}
The claim follows readily from Theorem~\ref{theo:bohnenblust} using  $r(K,C)= R(C,K)^{-1}$ (Equation \eqref{eq:inCircumRadius}) and $r_1(K,C)= R_1(C,K)^{-1}$ (Lemma~\ref{lem:diameterWidth}). For the statement about the tightness of \eqref{eq:leichtweiss}, we switch the roles of $K$ and $C$ used in the proof of the tightness of \eqref{eq:bohnenblust}.
\end{proof}

\section{The Inequalities of Jung and Steinhagen}
\label{sec:JungSteinhagen}
\paragraph{}  In the important special case that $C= \B_2$, stronger formulations of the original 
inequalities of Bohnenblust and Leichtweiß are known in the form of Jung's \cite{jung-01} and Steinhagen's \cite{steinhagen} Inequalities. However, for a body $K \in \CC^d$ with $s(K)<d$, the bounds of Theorems~\ref{theo:bohnenblust} and Corollary~\ref{cor:leichtweiss} become smaller for low values of $s(K)$ and can therefore be used to improve Jung's and Steinhagen's Inequalities. The two following theorems show that, building on symmetry coefficients, this is already the best one can obtain.

\begin{theo}[Sharpening Jung's Inequality]
Let $K \in \CC^d$. Then
\begin{equation}
  \frac{R(K, \B_2)}{R_1(K, \B_2)} \leq  \min \left\{ \sqrt{\frac{2d}{d+1}}, \frac{2s(K)}{s(K)+1} \right \}.
\label{eq:JungGeneralized}  
\end{equation}
This bound is best possible in the sense that for every value of $\sigma \in [1,d]$, there is a $K \in \CC^d$ such that $s(K)= \sigma$ and \eqref{eq:JungGeneralized} is tight for $K$.
\label{theo:jung}
\end{theo}

\begin{proof}
The inequality in \eqref{eq:JungGeneralized} follows directly from Jung's original inequality in conjunction with Theorem~\ref{theo:bohnenblust}. 
In order to show that the bound is best possible, let $\sigma \in [1,d]$, 
$T\subseteq \B_2$ a regular simplex with all its vertices on the Euclidean unit sphere, and $$K:= \conv\left(T\cup \frac{1}{\sigma} \B_2\right).$$
Then, by Lemma~\ref{lem:convTcupB}, $s(K)= \sigma $, $R(K, \B_2)=1$, and 
$$R_1(K,\B_2)= \max\left\{R_1(T, \B_2), \frac{\sigma+1}{2\sigma}\right\}.$$
Since $R_1(T,\B_2)= \sqrt{\frac{d+1}{2d}}$ by Jung's Theorem, $K$ fulfills Inequality \eqref{eq:JungGeneralized} with equality. 
\end{proof}

\begin{theo}[Sharpening Steinhagen's Inequality]
Let $K \in \CC^d$. Then
\begin{equation}
  \frac{r_1(K, \B_2)}{r(K, \B_2)} \leq  
\begin{cases}
\min \left\{~\sqrt{d},~~\frac{s(K)+1}{2}\right\} & \text{ if $d$ is odd} \\
\min \left\{\frac{d+1}{\sqrt{d+2}}, \frac{s(K)+1}{2}\right\} & \text{if $d$ is even.} \\
\end{cases}  
\label{eq:SteinhagenGeneralized}  
\end{equation}
This bound is best possible in the sense that for every value of $\sigma \in [1,d]$, there is a $K \in \CC^d$ such that $s(K)= \sigma$ and \eqref{eq:SteinhagenGeneralized} is tight for $K$.
\end{theo}
\begin{proof}
The inequality in \eqref{eq:SteinhagenGeneralized} follows directly from Steinhagens's original theorem in conjunction with Corollary~\ref{cor:leichtweiss}. In order to show that the bound is best possible, let $\sigma \in [1,d]$ and  $$K:= T\cap \frac{\sigma}{d} \B_2.$$
Then $\frac{\sigma}{d} \in \left[\frac{1}{d}, 1\right]$ and, by Lemma~\ref{lem:simplexCapBall}, 
$$s(K)= \sigma,\quad r(K,\B_2)= \frac{1}{d}\quad \text{and} \quad r_1(K,\B_2)= \min \left\{r_1(T,\B_2), \frac{\sigma+1}{2d}\right\}.$$
Thus, $K$ fulfills Inequality \eqref{eq:SteinhagenGeneralized} with equality.

\end{proof}

\section{An Inequality between In- and Circumradius}
\label{sec:FejesToth}
\paragraph{} In this section we present a generalization of a classical inequality, stating that the Euclidean circumradius of a simplex is at least $d$ times larger than its inradius. We refer to \cite[p. 28]{fejesToth1953lagerungen} for historical comments on the original authorship of the inequality itself and different proofs. Theorem~\ref{theo:fejesToth} generalizes this inequality by lower bounding the ratio of $R(K,C)$ and $r(K,C)$ in terms of $s(K)$ and $s(C)$ for arbitrary $K,C \in \CC^d$. The original inequality can be recovered from Theorem~\ref{theo:fejesToth} by choosing $C= \B_2$ and restricting $K$ to simplices.

\begin{theo}[Ratio of in- and circumradius]
Let $K,C \in \CC^d$. Then,
\begin{equation}
 \frac{R(K,C)}{r(K,C)} \geq \max\left \{\frac{s(K)}{s(C)} ,\frac{s(C)}{s(K)} \right \}.
 \label{eq:fejesToth}
\end{equation}
This bound is best-possible in the sense that for every $\sigma_K, \sigma_C \in [1,d]$, there exist $K$, $C$ such that $s(K)= \sigma_K$, $s(C)= \sigma_C$, and $K$ and $C$ fulfill \eqref{eq:fejesToth} with equality.
\label{theo:fejesToth}
\end{theo}
\begin{proof}
Since, by \eqref{eq:inCircumRadius},
 $$\frac{R(K,C)}{r(K,C)} = R(K,C)R(C,K) = \frac{R(C,K)}{r(C,K)},$$ 
it suffices to show $R(K,C)R(C,K) \ge \frac{s(K)}{s(C)}$ and we may assume without loss of generality that $C$ is Minkowski centered.
 
Because of Lemma~\ref{lem:radiiAttained}, there exist $c_i \in \R^d$, $i=1,2$, such that  $c_1 + K \subseteq R(K,C)C$ 
  and $-C \subseteq c_2 + R(C,K)(-K)$. Hence 
  \[c_1 + K \subset R(K,C)s(C)(-C) \subset R(K,C)s(C) c_2 + R(K,C)s(C)R(C,K)(-K)\]
  and thus $R(K,C)s(C)R(C,K) \geq s(K)$ by definition of $s(K)$. 
  
For the tightness of \eqref{eq:fejesToth}, let $\sigma_K, \sigma_C \in [1,d]$, $T\subseteq \B_2$ a regular simplex with all its vertices on the Euclidean unit sphere, and 
$$K:= T\cap \frac{\sigma_K}{d} \B_2 \quad \text{ and } \quad C:= T \cap \frac{\sigma_C}{d}\B_2.$$
By Lemma~\ref{lem:simplexCapBall}, $s(K)= \sigma_K$ and $s(C)= \sigma_C$. Since the roles of $K$ and $C$ are interchangeable, we can assume without loss of generality that $\sigma_K \geq \sigma_C$. Then, by Lemma~\ref{lem:simplexCapBall}, $R(K,C) = \frac{\sigma_K}{\sigma_C}$ and $R(C,K)= 1$. Hence, we obtain
$$ R(K,C)R(C,K)= \frac{s(K)}{s(C)} = \max\left \{\frac{s(K)}{s(C)} ,\frac{s(C)}{s(K)} \right \}.$$
\end{proof}

\begin{rem}[Comments on Theorem~\ref{theo:fejesToth}]
Let again $T\subseteq \B_2$ be a regular simplex with all its vertices on the Euclidean unit sphere,  
$\sigma_K,\sigma_C \in [1,d]$, $K:= \conv(T \cup \frac1{\sigma_K}\B_2)$, and $
C:= \conv(T \cup \frac1{\sigma_C}\B_2)$. 
With the help of Lemma~\ref{lem:convTcupB}, it is easy to verify that the pair $(K,C)$ fulfills
\eqref{eq:fejesToth} with equality for all choices of $\sigma_K,\sigma_C$, too.

On the other hand, the body $K$ in Figure~\ref{fig:SversusS0} shows that $s(K)$ cannot be replaced by $s_0(K)$ in Theorem~\ref{theo:fejesToth}.
\end{rem}

\begin{figure}[h]
\centering
\includegraphics[width=0.4\textwidth]{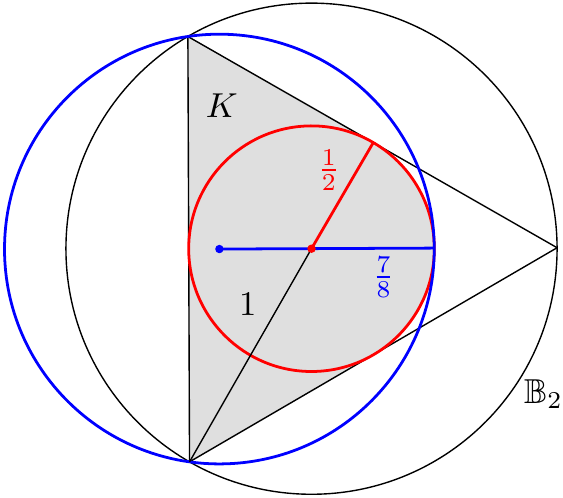}
\caption{Let $p^1, p^2, p^3 \in \S_2$, \st~$T=\conv\{p^1,p^2,p^3\}$ is a regular triangle and
  $K = \conv\left([p^1,p^2] \cup \frac12\B_2\right)$ (in gray). Then $r(K, \B_2)=\frac12$,  $R(K, \B_2)=\frac78$, and
  $s_0(K)=2$. Thus $\frac{R(K, \B_2)}{r(K,\B_2)} = \frac74 < 2 =s_0(K)$,  which shows that $s(K)$ 
  cannot be replaced by $s_0(K)$ in Theorem \ref{theo:fejesToth}. \label{fig:SversusS0}}

\end{figure}

\begin{remark}[A chain of inequalities]
Combining Theorem~\ref{theo:bohnenblust}, Corollary \ref{cor:leichtweiss}, and Theorem~\ref{theo:fejesToth}, we obtain the following chain of inequalities for $K,C \in \CC^d_0$, with $C$ symmetric. 
\begin{equation}
  \begin{split}
    2r(K,C) & \leq w(K,C) \leq (1+s(K))r(K,C) \leq r(K,C) + R(K,C) \\ & \leq \frac{1+s(K)}{s(K)}R(K,C)\leq D(K,C) \leq 2R(K,C).
  \end{split}
\label{eq:chain}
\end{equation}
\end{remark}

\paragraph{} 
With \eqref{eq:chain}, it is now immediate to confirm that in every normed space all three generalized inequalities \eqref{eq:bohnenblust}, \eqref{eq:leichtweiss}, \eqref{eq:fejesToth} are tight for any set $K$ of constant width (i.e.~ for all $K$, s.t.~$K-K=C$). 

However, since $s(K) = d$ is attained only for (fulldimensional) simplices, the equality chain 
$w(K) = (1+d)r(K) = r(K) + R(K) = \frac{1+d}{d} R(K) = D(K)$ can only hold true if there is a simplex
 $K$ of constant width, which means $r(K-K,C)=w(K,C)=D(K,C)=R(K-K,C)$ and thus
the unit ball of that space must be a central symmetrization of the simplex $K$.
The fact that, in Euclidean spaces of dimension at least 2, simplices cannot be of constant width retrospectively explains the case distinction
in \eqref{eq:JungGeneralized} and \eqref{eq:SteinhagenGeneralized}.

\paragraph{} Furthermore, the inequality 
$$ \frac{w(K,\B_2)}{R(K, \B_2)} \le 
\begin{cases}  2\sqrt{\frac{1}{d}},& \text{if } d \text{ is odd}\\
  \frac{2(d+1)}{d \sqrt{d+2}},& \text{if } d \text{ is even.}
\end{cases}$$ 
by Alexander \cite{alexander} (independently found in \cite{gritzmannLassak}), relating the width and circumradius of simplices in Euclidean space is an immediate consequence of combining \eqref{eq:SteinhagenGeneralized} and \eqref{eq:fejesToth}.
Allowing sets $K$ of arbitrary Minkowski asymmetry, we obtain
two new  inequalities for general symmetric $C$ directly from \eqref{eq:chain} and two for the euclidean case from combining
  \eqref{eq:fejesToth} with \eqref{eq:JungGeneralized} or \eqref{eq:SteinhagenGeneralized}, respectively:
\begin{corollary}[Generalized analogues to Alexander's Inequality] \label{cor:generalized_alex}
  Let $K,C \in \CC_0^d$ and $C$ be 0-symmetric. Then
  \begin{enumerate}[a)]
  \item $\displaystyle\frac{w(K,C)}{R(K,C)} \le 1 + \frac1{s(K)}$ and 
    $\displaystyle\frac{w(K,\B_2)}{R(K,\B_2)} \le \begin{cases}
      \min \left\{\frac{2\sqrt{d}}{s(K)},1+\frac1{s(K)}\right\} & \text{$d$ odd} \\
      \min \left\{\frac{2(d+1)}{s(K)\sqrt{d+2}}, 1+\frac1{s(K)} \right\} & \text{$d$ even,} \\
    \end{cases} $
  \item $\displaystyle\frac{r(K,C)}{D(K,C)} \le \frac1{s(K)+1}$ and     
    $\displaystyle\frac{r(K,\B_2)}{D(K,\B_2)} \le \min\left\{\frac{\sqrt{d}}{s(K)\sqrt{2(d+1)}},\frac{1}{s(K)+1} \right\}. $    
  \end{enumerate}
  The two inequalities are tight exactly for the examples used to show that the corresponding 
  inequalities \eqref{eq:JungGeneralized} or \eqref{eq:SteinhagenGeneralized} are tight.
\end{corollary}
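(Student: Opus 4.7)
The plan is to derive both inequalities from the chain \eqref{eq:chain} using that $s(C)=1$ when $C$ is $0$-symmetric, and then, for the Euclidean refinements, to combine the general bound with the sharpened Jung and Steinhagen inequalities already established.

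For part a), I would start from the chain inequality $w(K,C) \le (1+s(K))r(K,C)$ together with Theorem \ref{theo:fejesToth} applied to the pair $(K,C)$ with symmetric $C$, which yields $r(K,C)/R(K,C) \le s(C)/s(K) = 1/s(K)$. Multiplying these two estimates gives the general bound $w(K,C)/R(K,C) \le (s(K)+1)/s(K) = 1+1/s(K)$. For the Euclidean refinement I would write
\[
\frac{w(K,\B_2)}{R(K,\B_2)} \;=\; 2\,\frac{r_1(K,\B_2)}{r(K,\B_2)}\cdot\frac{r(K,\B_2)}{R(K,\B_2)},
\]
bound the first factor by sharpened Steinhagen \eqref{eq:SteinhagenGeneralized} and the second by $1/s(K)$, and observe that the product then equals the claimed minimum (the entry $1+1/s(K)$ coming from the $(s(K)+1)/2$ branch of \eqref{eq:SteinhagenGeneralized}, and the entries involving $d$ from the other branch).

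For part b), I would use Bohnenblust's sharpened inequality \eqref{eq:bohnenblust} with symmetric $C$ (so $s(C)=1$), giving $R_1(K,C)/R(K,C) \ge (s(K)+1)/(2 s(K))$, hence $D(K,C) = 2 R_1(K,C) \ge ((s(K)+1)/s(K))\,R(K,C)$. Combined with $r(K,C) \le R(K,C)/s(K)$ from Theorem \ref{theo:fejesToth}, this yields $r(K,C)/D(K,C) \le 1/(s(K)+1)$. The Euclidean refinement proceeds by the same two-factor split as in part a), but replacing Steinhagen by Jung's sharpened inequality \eqref{eq:JungGeneralized}, and simplifying $\tfrac{1}{2}\sqrt{2d/(d+1)} = \sqrt{d/(2(d+1))}$ to match the stated form.

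For tightness, I would plug the extremal examples from the sharpened Steinhagen proof (namely $K = T \cap \tfrac{\sigma}{d}\B_2$ for part a) and from the sharpened Jung proof (namely $K = \conv(T \cup \tfrac{1}{\sigma}\B_2)$ for part b) into the two-factor products. Both families attain equality in Theorem \ref{theo:fejesToth} with $C=\B_2$, as already observed in Section \ref{sec:FejesToth}, and they simultaneously attain equality in \eqref{eq:SteinhagenGeneralized} respectively \eqref{eq:JungGeneralized} by construction; hence every factor in the chain is sharp and the composite bound is attained. The only delicate point — which is also the main thing to verify carefully — is that the simultaneous equality in the Fejes-Tóth step and in the Jung/Steinhagen step happens for the same choice of $\sigma$, but this is precisely how the tight examples were originally designed (via Lemmas \ref{lem:simplexCapBall} and \ref{lem:convTcupB}), so no new computation is required beyond invoking the radii values listed there.
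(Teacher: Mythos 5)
Your proposal is correct and follows essentially the same route as the paper: the general bounds come from the chain \eqref{eq:chain} (equivalently, from Corollary~\ref{cor:leichtweiss} resp.\ Theorem~\ref{theo:bohnenblust} with $s(C)=1$ combined with Theorem~\ref{theo:fejesToth}), the Euclidean refinements from combining Theorem~\ref{theo:fejesToth} with \eqref{eq:SteinhagenGeneralized} resp.\ \eqref{eq:JungGeneralized}, and tightness from the same extremal families $T\cap\frac{\sigma}{d}\B_2$ and $\conv(T\cup\frac1\sigma\B_2)$. The only point you share with the paper in leaving implicit is the \emph{only-if} direction of the word \enquote{exactly} in the tightness claim, so nothing is lost relative to the original.
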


\section{John's Theorem}
\label{sec:john}
\paragraph{} Finally, we cross over from containment problems under homothetics to those 
under affinities.  The most famous containment problem under affinities probably is computing ellipsoids of 
maximal volume contained in convex bodies. 
In particular the second part of Proposition~\ref{prop:john},  
which states that $\B_2$ beeing the ellipsoid of maximal volume in $K$ ensures 
that $K\subseteq d \, \B_2$, is an indispensable tool when it comes to approximations of convex bodies by 
simpler geometric objects.  We give an improved version of this part of the theorem in two ways: First,
we obtain a new lower bound in terms of the Minkowski asymmetry by Theorem \ref{theo:fejesToth}. Second, we present a simplified proof of the sharpened upper bound that is also obtained in \cite[Theorem 9]{bf-08}.

\begin{theo}[Sharpening John's Theorem]
Let $K\in\CC_0^d$ such that $\B_2$ is the ellipsoid of maximal volume enclosed in $K$. Then
$K \subseteq \rho \B_2$, where $s(K) \le \rho \le \sqrt{s_0(K)d}$.
\label{theo:strongJohn}
\end{theo}
\begin{proof}
The lower bound on $\rho$ directly follows from applying Theorem \ref{theo:fejesToth} on $K$ and the optimal ellipsoid contained in $K$ as $C$, noticing that $s(C)=1$ and therefore 
$\max\left\{\frac{s(K)}{s(C)},\frac{s(C)}{s(K)}\right\} = s(K)$.

Now, consider the upper bound on $\rho$:
If $\B_2$ is the ellipsoid of maximal volume enclosed in $K$, by John's Theorem (Proposition~\ref{prop:john}), for some $k\in \{d+1,\dots ,\frac{d+3}{2}\}$, there exist $u_1,\dots, u_k \in \bd(K) \cap \S_2$ and $\lambda_1,\dots, \lambda_k >0$ which satisfy

\begin{equation} \sum\limits_{i=1}^k \lambda_i u_i= 0 \label{eq:john1}\quad \text{ and } \quad \sum\limits_{i=1}^k \lambda_i u_i u_i^T = I \end{equation}

First, observe that, because of \eqref{eq:john1}, $\sum_{i=1}^k \lambda_i = \mathrm{trace}(I)=d$ and that 
$$-\frac{1}{s_0(K)} K\subseteq K \subseteq \{x\in \R^d: u_i^T x \leq 1 ~\forall i\in[k]\},$$ 
which means 
$u_i^T \left(-\frac{1}{s_0(K)}x\right) \leq 1$ and therefore
$-s_0(K)\leq u_i^Tx\leq 1$
for all $x \in K$ and all $i \in [k]$.
Together with $\lambda_i >0$ for $i\in [k]$ and the identities in (\ref{eq:john1}), this yields for every $x \in K$
\begin{eqnarray}
0 	& \leq  & \sum\limits_{i=1}^k \lambda_i (1-u_i^T x)(s_0(K) + u_i^Tx) \nonumber \\
	& =		& \sum\limits_{i=1}^k \lambda_i \left( s_0(K) + u_i^T x - s_0(K) u_i^T x - (u_i^Tx)^2\right) \nonumber \\
	& =		& \left(\sum\limits_{i=1}^k \lambda_i\right) s_0(K) + (1-s_0(K))\left(\sum\limits_{i=1}^k \lambda_i u_i\right)^T x - \|x\|_2^2 \nonumber \\
	& =		& d s_0(K) -\|x\|_2^2. \nonumber
\end{eqnarray}
Thus, $\|x\|_2\leq \sqrt{s_0(K)d}$.
\end{proof}

\paragraph{} Replacing the John asymmetry by the Loewner asymmetry as suggested in Remark~\ref{rem:Loewner} one can derive the same results as above for the latter one. Surely it would be even better if one could replace $s_0$ by the Minkowski asymmetry $s \le s_0$, which already was conjectured to be true in \cite{bf-08}, but seems to be more challenging.

\paragraph{} If a polytope $P \subseteq \R^d$ is given in $\CH$-presentation, it is shown in \cite{khachiyanTodd-complexityEllipsoid} that the ellipsoid of maximal volume inscribed to $P$ can be approximated to arbitrary accuracy in polynomial time. (See also \cite{toddYildirim} and the extensive list of references therein.) 
It is not known, on the other hand, whether the same is true for the minimum volume enclosing ellipsoid of $P$. In fact, it is conjectured in \cite{khachiyanTodd-complexityEllipsoid} that approximation to arbitrary accuracy of the minimum volume enclosing ellipsoid of an $\CH$-presented polytope is $\NP$-hard. 

An approximation with a multiplicative error factor of at most $(1+\varepsilon)d$, however, is readily provided by combining the algorithm mentioned above and John's Theorem. Depending on the input polytope $P$, the Sharpened inequality in Theorem~\ref{theo:strongJohn}  allows to improve this bound to $(1+\varepsilon)\sqrt{s_0(P)d}$, where the coefficient $s_0(P)$  can be computed (approximated) via Linear Programming once (an approximation of) the center of the ellipsoid of maximum volume contained in $P$ is known.
Taking into account the hardness of approximating the circumradius of an $\CH$-presented polytope even around a fixed center (cf. \cite{b-02,normmaxW1}), the improvement of the bound by the computation of $s_0(P)$ is quasi at no cost.

\paragraph{Acknowlegements.} 
We would like to thank Salvador Segura Gomis for asking the right questions and 
Bernardo González Merino and Andreas Schulz for useful pointers to relevant literature.

\bibliographystyle{plain}
\bibliography{references}

\end{document}